\newtheorem{lemma}{Lemma}
\newtheorem{thm}{Theorem}
\begin{document}

\title{Spectral asymptotics and Lam\'e spectrum for coupled particles in periodic potentials}
\author{Ki Yeun Kim and Mark Levi$^\ast$ and Jing Zhou$^\dagger$} 
\address{Department of Mathematics, Penn State University}
\thanks{$^\ast$ML (ORCID 0000-0001-6345-9079) gratefully acknowledges support from the NSF grant DMS--0605878.\\
$\dagger$ Corresponding author: JZ (ORCID 0000-0002-9034-8961) Department of Mathematics, Penn State University, University Park, State College, PA 16802, United States. Email: jingzhou@psu.edu}
\maketitle

\begin{abstract}
We make two observations on the motion of  coupled particles in a periodic potential. Coupled pendula, or the space-discretized sine-Gordon equation is an example of this problem. Linearized spectrum of the synchronous motion turns out to have a hidden asymptotic periodicity in its dependence on the energy; this is the gist of the first observation. Our second observation is the discovery of a  special property of the purely sinusoidal potentials: the linearization around the synchronous solution  is equivalent to the classical Lam\`e equation. As a consequence, {\it  all but one instability zones  of the linearized equation collapse to a point for the one-harmonic potentials}.  This provides a new example where Lam\'e's finite zone potential arises in the simplest possible setting.   
\end{abstract}

\section{ {\bf Introduction; the setting}}    
In this paper we study the stability of a ``binary", i.e. of two elastically coupled particles  in a periodic potential $ V $ on line (c.f. Figure~\ref{fig:pendula}). 

\begin{figure}[thb]
 	\captionsetup{format=hang}
	\center{  \includegraphics{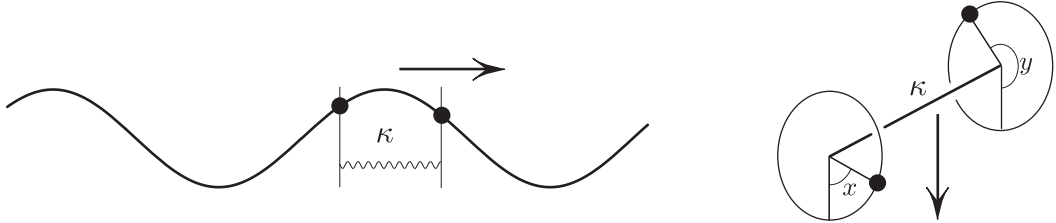}}
	\caption{Elastically coupled particles in a periodic potential $V$. The case $ V= \cos $ corresponds to torsionally coupled pendula.}
	\label{fig:pendula}
\end{figure}

The system is described by    
\begin{align}\label{eq:basicodegeneral}
&\ddot x + V^\prime (x)  = \kappa (y-x)\\
\notag
&\ddot y +V ^\prime (y) = \kappa (x-y). 
\end{align}
The special case of $ V (x) = - \cos x $ can be realized as two coupled pendula (c.f. Figure~\ref{fig:pendula}):
\begin{align}\label{eq:basicode}
&\ddot x + \sin x = \kappa (y-x)\\
\notag
&\ddot y +\sin y = \kappa (x-y).  
\end{align}
We note parenthetically that this system is in fact a discretization of the sine-Gordon equation, which arises naturally in many physical applications such as the Frenkel-Kontorova (F-K) model of electrons in a crystal lattice \cite{BrKi} and the arrays of Josephson junctions\cite{ImSc}.

	 \begin{figure}[thb]
 	\captionsetup{format=hang}
	\center{  \includegraphics{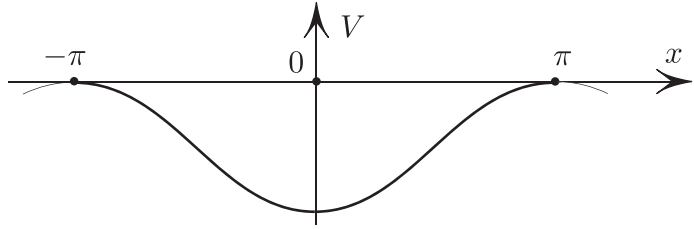}}
	\caption{Periodic potential $V$.}
	\label{fig:V}
\end{figure}

Furthermore, our model is related to the F-K model in the following way: the original F-K model consists of an infinite chain of particles with nearest-neighbor coupling, {\it  in an equilibrium state}.  One can consider instead a  {\it dynamical} Frenkel-Kontorova model: 
\begin{equation}  
	\ddot x_n+ \sin x_n=\frac{\kappa }{2}  (x_{n-1}-2x_n+x_{n +1})   
	\label{eq:dfk} 
\end{equation}  
(here the potential is special: $ V(x) = - \cos x $). The two-particle system (\ref{eq:basicode}) is a special case of   (\ref{eq:dfk}): it governs the evolution of space-periodic solutions of (\ref{eq:dfk}) of period $2$:    
\[
	x_n=x_{n+2} \  \  \hbox{for all }  \  \  n \in {\mathbb Z}.  
\]  
Indeed, substituting    $ x=x_{2n}, \  \    y = x_{2n+1} $ into   (\ref{eq:dfk})  yields   (\ref{eq:basicode}).  
 
We call a solution of   (\ref{eq:basicodegeneral})  {\it  synchronous} if  $ x=y $ for all $t$; the common angle   $ x=y\equiv p $ satisfies the single pendulum equation $ \ddot p+ V ^\prime ( p) = 0 $. 
 Up to  time translation,  solutions are determined by the total energy 
\begin{align}\label{eq:energy}
E = \frac{\dot{p}^2}{2} + V(p); 
 \end{align}
  $ E = 0 $ corresponds to the unstable equilibrium and also to the heterocinic solutions.  We thus think of   $ E$ as the ``excess energy" (or energy deficit if $ E < 0 $).  We study the stability of the synchronous solution $  x=y\equiv p   $, where $p$ satisfies
\begin{equation} 
	   \ddot p + V ^\prime ( p) = 0,
	\label{eq:sync}
\end{equation}     
 as $E$ varies.   
 
 The solution $ p = p(t;E) $ of   (\ref{eq:sync}) with the energy $E$ as in   (\ref{eq:energy}) is defined up to the time shift; we remove this ambiguity  by imposing the the condition  $ p(0, E) = 0 $    throughout the paper. 
 \vskip 0.2 in 
\noindent  {\bf Assumptions on $V$:} We assume throughout the paper that (c.f. Figure \ref{fig:V})
\begin{enumerate} \item $V: {\mathbb R}  \rightarrow {\mathbb R}  $,  $ V\in C^{(2)}( {\mathbb R}  ) $,     $ V(x+ 2 \pi )= V(x) $, 
\item   $V$ achieves  global maximum at $ x = \pi \,\hbox{mod} \,2 \pi $, with 
\begin{equation}
	 \label{eq:Vassumption} 
	V( \pi) = 0 \  \  \hbox{and}  \  \  V ^{\prime\prime} (0)< 0, 
\end{equation}   
\item There are no other global maxima, i.e. $ V(x)< 0 $ for all $ x\not= \pi + 2 \pi n $, $ n \in {\mathbb Z}  $. 
\end{enumerate} 
\vskip 0.1 in 
We note that the canonical  example of coupled pendula (\ref{eq:basicode})  satisfies these assumptions. 

Under these assumptions we make two observations on the coupled particle  in periodic  potential $V$. First, we show that the linearized spectrum of the synchronous motion has a hidden asymptotic periodicity in its dependence on the (logarithm of) energy. Second, we show that in the special case of   sinusoidal potentials the linearization around the synchronous solution is equivalent to the classical Lam\`e equation. We use this fact to prove that sinusoidal potentials have a remarkable property: synchronous motions in such potentials are never hyperbolic except for one interval of energy values. The appearance of Lam\'e's equation as a linearization around periodic motions of  particles in a H\'enon-Heiles   potentials in $ {\mathbb R} ^2 $ has been observed earlier by  Churchill, Pecelli and Rod in their extensive study \cite{CPR}. 

This equation was first introduced by Lam\'e in 1837 in the separation of variables of the Laplace equation in elliptic domains \cite{Lame1837} and later shown to arise  in many situations, for instance in the study of the Korteweg-de Vries equation \cite{Novikov1974}. The double pendula example yields a yet another appearance of the Lam\'e equation in perhaps the most basic setting.

This paper is structured as follows. In Section 2 we describe the infinitely repeating loss and gain of strong stability with the change of energy, and the  asymptotic periodicity of the linearized spectrum as a function of  the logarithm of the energy, in the limit of small energies. In Section 3 we show that for large energies synchronous solutions are   linearly stable the for all periodic (sufficiently smooth) potentials. Finally, in  the last Section 4  we (i) show that the linearization of synchronous solution of coupled pendula is a special case of  Lam\'e's equation,  (ii)  show that only one interval of instability survives, i.e. that sinusoidal potentials are ``exceptionally stable", and (iii)  give  an explicit expression for the interval of unstable energy values.

  \section{{\bf Asymptotic spectral periodicity.}}
In this section we show that the Floquet spectrum of  the synchronous solution changes periodically as the function of the logarithm of the energy, asymptotically for small energies. We first make a topological observation in Section 2.1 that the synchronous solution oscillates between stability and instability zones infinitely many times as the energy $E$ approaches zero. In Section 2.2 we then describe the asymptotically periodic spectral dependence on the energy $E$ for small energies.
  
 \subsection{\bf A topological observation.}
 
 We first note a fact based on a topological argument: as the energy $E$ crosses a neighborhood of $ E = 0$, the synchronous solution    (\ref{eq:sync}) loses and regains strong stability infinitely many times.  To make this statement more precise, consider the linearization of   (\ref{eq:basicode}) around the synchronous solution   (\ref{eq:sync}):     

\begin{align} 
&\ddot{\xi} +  V ^{\prime\prime}  ( p) \xi + \kappa( \xi  -\eta)=0\nonumber \\
&\ddot{\eta}+ V ^{\prime\prime}  ( p)\eta + \kappa(\eta - \xi)=0.\nonumber  
\end{align}
By setting $ u = \xi + \eta $,  $w=\xi-\eta$ we decouple this system into
\begin{align}\label{eq:cml}
&\ddot u+ V ^{\prime\prime} (p) u = 0\\
&\ddot{w}+ (2\kappa+ V ^{\prime\prime} (p)   ) \;w = 0.\label{eq:plame}
\end{align}

The decoupled linear system with periodic coefficients can be written in the Hamiltonian form and thus the spectrum of the associated Floquet matrix is symmetric with respect to the unit circle. Two of these eigenvalues are   $ \lambda_1= \lambda_2 = 1$; these eigenvalues correspond to   (\ref{eq:cml}).  The remaining two eigenvalues $ \lambda_3, \  \lambda_4 $ correspond to  (\ref{eq:plame}). These determine stability of the synchronous solution; we will call this solution  {\it strongly stable} if   $ \lambda_3, \   \lambda_4  $ lie on the unit circle and differ from $ \pm 1 $.

As $ E $ decreases to $0$ (the heteroclinic value), strong stability of the synchronous solution is lost and regained infinitely many times, according to the following theorem.

\begin{thm}
Assume that $V$ satsfies the conditions (1)-(3) above, and let $2\kappa > -V ^{\prime\prime} ( \pi )$. Then  there exists a monotone decreasing sequence of disjoint segments $ [E_{2n}, E _{2n-1}] $ clustering at   $0$: 
\begin{equation} 
	E_1\geq E _2>E _3\geq E _4> \ldots \downarrow 0
	\label{eq:seq}
\end{equation}   
such that for some  $ E \in [E_{2n}, E_{2n-1}] $ the synchronous solution of  (\ref{eq:basicode}) with energy $E$   is not strongly stable, i.e. the eigenvalues
$ \lambda_3 $, $ \lambda_4 = \lambda_3 ^{-1}  $ of the linearization   (\ref{eq:plame})  are real.  Outside these intervals, i.e. for $ E \in(E_{2n+1}, E_{2n})$ and for $ E \in (E_1, \infty ) $ the linearlization is strongly stable. 
\end{thm}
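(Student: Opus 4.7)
The goal is to analyze the trace $\Delta(E) := \mathrm{tr}\,M(E)$ of the Floquet monodromy $M(E)\in SL(2,\mathbb R)$ of $(\ref{eq:plame})$ and show, via a continuity/IVT argument, that $\Delta(E)$ leaves the interval $(-2,2)$ infinitely often as $E\downarrow 0$. Since the synchronous solution is strongly stable iff $|\Delta(E)|<2$, this produces a sequence of closed intervals $[E_{2n},E_{2n-1}]$ of non-strong-stability clustering at $0$, with strong stability on the open intervals in between. The oscillation of $\Delta$ will be extracted from the growth of a scaled Pr\"ufer angle associated to $(\ref{eq:plame})$ over one period $T(E)$ in the limit $E\to 0^+$.

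Write $(\ref{eq:plame})$ as $\ddot w + Q(t;E)w = 0$ with $Q(t;E) := 2\kappa + V''(p(t;E))$, periodic of period $T(E)$, and set $\omega_0 := \sqrt{2\kappa + V''(\pi)}>0$ (positive by the hypothesis $2\kappa > -V''(\pi)$). The pendulum quadrature $T(E) = \oint dp/\sqrt{2(E-V(p))}$ has a logarithmic singularity at $p=\pi$ (where $V(\pi)=0$ and $V''(\pi)<0$), so $T(E)\to+\infty$ as $E\to 0^+$. In the scaled phase plane, set $y := \dot w/\omega_0$ and define $\theta(t;E)$ by $(w,y) = r(\cos\theta,-\sin\theta)$. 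A direct calculation gives
\[
\dot\theta \;=\; \omega_0\sin^2\theta \;+\; \omega_0^{-1}\,Q(t;E)\cos^2\theta.
\]
Split $[0,T(E)]$ into the long part $L_\delta(E) := \{t : |p(t;E)-\pi|\le\delta\}$ and its complement $S_\delta(E)$. For $\delta$ small, continuity of $V''$ gives $Q\ge\omega_0^2/2$ on $L_\delta$, so $\dot\theta\ge\omega_0/2$ there. On $S_\delta$, the assumption $V<0$ away from $\pi + 2\pi\mathbb Z$ bounds $\dot p=\sqrt{2(E-V(p))}$ from below, keeping $|S_\delta(E)|$ uniformly bounded in $E$, while $|\dot\theta|$ is uniformly bounded by the RHS above. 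Hence $\Theta(E) := \theta(T(E);E) \ge \tfrac{\omega_0}{2}T(E) + O(1) \to +\infty$.

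Normalize $w_E$ by $w_E(0)=1,\ \dot w_E(0)=0$, so $\theta(0;E)=0$. When $\Theta(E)=k\pi$ for an integer $k$, the scaled phase vector returns to the horizontal axis at $t=T(E)$, i.e.\ $\dot w_E(T(E))=0$. The monodromy in scaled coordinates then has first column $((-1)^k r,0)$ with $r := r(T(E))>0$, and $\det M = 1$ gives
\[
M(E)=\begin{pmatrix}(-1)^k r & b \\ 0 & (-1)^k/r\end{pmatrix},\qquad \Delta(E)=(-1)^k\bigl(r+r^{-1}\bigr),
\]
so $|\Delta(E)|\ge 2$ with sign $(-1)^k$. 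By continuity of $\Theta$ and its divergence, $\Theta(E)=k\pi$ has a solution in $(0,\epsilon)$ for every $\epsilon>0$ and every sufficiently large $k$; hence both $\{\Delta\ge 2\}$ and $\{\Delta\le -2\}$ accumulate at $0$. Between any two nearby $E$'s with opposite signs of $\Delta$, IVT applied to $\Delta$ furnishes a strongly stable intermediate point, so the closed set $\{E>0 : |\Delta(E)|\ge 2\}$ has infinitely many distinct closed connected components clustering at $0$. Labelling them in decreasing order as $[E_{2n},E_{2n-1}]$ yields the claim, with strong stability on each $(E_{2n+1},E_{2n})$; strong stability on $(E_1,\infty)$ follows from the high-energy analysis carried out in Section~3.

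The main obstacle is making the divergence $\Theta(E)\to+\infty$ rigorous: one must ensure that $|S_\delta(E)|$ and the Pr\"ufer speed $|\dot\theta|$ on $S_\delta(E)$ remain uniformly bounded as $E\to 0^+$, so that the $S_\delta$-contribution to $\Theta(E)$ is a genuine $O(1)$ correction while the long-time contribution from $L_\delta(E)$ dominates. These bounds follow from the separatrix limit --- the near-separatrix rotational orbit converges to the heteroclinic of $(\ref{eq:sync})$ uniformly on any compact time interval away from $\{p=\pi\}$, and the heteroclinic traverses any set $\{|p-\pi|\ge\delta\}$ in finite time --- which is standard but constitutes the linchpin of the argument.
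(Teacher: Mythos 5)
Your proposal is correct and rests on the same core mechanism as the paper's proof: an angular (Pr\"ufer-type) variable for the linearization (\ref{eq:plame}) whose total winding over one period diverges as $E\downarrow 0$ because $T(E)\to\infty$, which forces the Floquet matrix to pass in and out of the elliptic region of $SL(2,\mathbb R)$ infinitely often. Two differences in execution are worth recording. First, the paper bounds $\dot\theta$ away from zero pointwise for all $t$ by replacing $V''(p)$ with $V''(\pi)$, an inequality that implicitly needs $V''(p)\ge V''(\pi)$ (so in particular $2\kappa+V''(p)>0$) along the whole orbit --- true for $V=-\cos$ but not for every $V$ satisfying (1)--(3); your splitting of the period into the set where $p$ is within $\delta$ of the maximum (where $Q\ge\omega_0^2/2$ gives $\dot\theta\ge\omega_0/2$) and its complement (traversed in time bounded uniformly in $E$ since $-V\ge c_\delta>0$ there, with $|\dot\theta|$ bounded because $V''$ is bounded) yields $\Theta(E)\ge\tfrac{\omega_0}{2}T(E)-O(1)$ under the stated hypotheses alone, so your estimate is in fact the more robust one; just measure $|p-\pi|$ modulo $2\pi$, since the orbit lingers near $-\pi$ as well as $+\pi$. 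Second, where the paper appeals to the geometry of the elliptic domains in $SL(2,\mathbb R)$ following \cite{Levi88}, you read off non-strong-stability directly from the triangular monodromy at energies with $\Theta(E)=k\pi$ (trace $(-1)^k(r+r^{-1})$, real eigenvalues) and interlace strongly stable energies by the intermediate value theorem applied to the trace; this is an elementary, self-contained substitute that buys the same alternation. One small bookkeeping caveat: the closed components of $\{|\Delta|\ge 2\}$ need not a priori be enumerable as a single decreasing sequence (they could accumulate at an interior energy), so it is cleaner to pick interlacing strongly stable energies $\sigma_n\downarrow 0$ and let $E_{2n-1}$, $E_{2n}$ be the maximum and minimum of the non-strongly-stable set in $[\sigma_{n+1},\sigma_n]$; your appeal to the Section 3 result for stability on $(E_1,\infty)$ is correct and is in fact needed for that part of the statement, a point the paper's own proof leaves implicit.
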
 

\begin{proof}
The proof follows the same idea as in \cite{Levi88}.  
The angle $\theta = \arg (w+ i \dot w) $ in the phase plane of   (\ref{eq:plame})  satisfies 
\begin{equation} 
	\dot \theta = - \sin ^2 \theta - (2 \kappa + V ^{\prime\prime}  (p )) \cos ^2 \theta   ,
	\label{eq:thetaeq}
\end{equation} 
where $ p=p(t; E ) $ increases monotonically from $- \pi $ to $  \pi $ over the period 
\begin{equation} 
	T( E ) =\frac{1}{  \sqrt{ 2 } }  \int_{- \pi }^{  \pi } \frac{dx}{  \sqrt{ E-V(x) }}, 
	\label{eq:period}
\end{equation}  
obtained from   (\ref{eq:energy}). 
  For any solution $ \theta $ of   (\ref{eq:thetaeq})  we have $ \theta (T( E )) \rightarrow - \infty $   as $ E \downarrow 0 $ -indeed,     $  T( E ) \rightarrow \infty $ as $ E \downarrow 0 $, while $ \dot \theta $ is bounded away from $0$: by a constant:  
  \[
	\dot \theta < -  \sin ^2 \theta -(2 \kappa+ V ^{\prime\prime}  (p) ) \cos ^2 \theta < - \sin ^2 \theta - 
	(2 \kappa +V ^{\prime\prime}  ( \pi ) \cos^2 \theta <-\min (1, 2 \kappa +V ^{\prime\prime}  (\pi ))<0. 
\]  
    
This implies that the Floquet matrix of   (\ref{eq:plame})  enters and leaves stability domains in $ SL(2, {\mathbb R}  ) $ infinitely many times, Figure~\ref{fig:sl2r}, i.e. there exists an infinite sequence of open and disjoint $ E $--intervals for  which the Floquet matrix lies in one of the two elliptic domains in the symplectic group. 
\end{proof}

\subsection{\bf The normal form.} 

In this section we show the asymptotic periodicity of the spectrum of  the trace of the Floquet matrix of   (\ref{eq:plame})) as $ E $ approaches  $0$.  

\begin{thm} \label{logscale} 
Assume that  $ V   $ satisfies the assumptions stated at the end of the previous section, and introduce  
\begin{equation} 
	\lambda =  \sqrt{ - V ^{\prime\prime} (\pi ) }, 
	\label{eq:lambda}
\end{equation}  
the positive eigenvalue of the saddle $ ( \pi \, \hbox{mod}\, \,2 \pi, 0 ) $ in the phase plane of  $ \ddot x + V ^\prime (x) =0 $. Assume also that $ 2 \kappa + V ^{\prime\prime} (\pi ) \equiv 2 \kappa + \lambda ^2 >0  $ and define the frequency $ \omega $ via\footnote{ $ \omega $ is the frequency of ``internal oscillations" of the ``binary" near the maximum of the potential. }
\begin{equation} 
	 \omega ^2 = 2 \kappa + \lambda ^2    > 0.  
	\label{eq:positive}
\end{equation}   
 There exist constants $ a\geq 2 $ and $ \varphi$ depending on the potential $V$  and on $\kappa$  such that the Floquet matrix   of   (\ref{eq:plame})  satisfies  
 \begin{equation} 
	{\rm tr}\, F_E = a \cos  \biggl( \frac{\omega }{\lambda }  \ln E -\varphi  \biggl)    + o(E ^0 ). 
		\label{eq:normalform}
\end{equation}   
 \end{thm}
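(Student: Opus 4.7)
The mechanism behind (\ref{eq:normalform}) is that, as $E\downarrow 0$, the synchronous orbit $p(t;E)$ spends almost the entire period in a tiny neighborhood of the saddle $p=\pi$, where (\ref{eq:plame}) is close to the constant-coefficient oscillator $\ddot w+\omega^2 w=0$. Thus $F_E$ factors as a ``saddle-passage'' monodromy sandwiched between two ``bulk'' monodromies of bounded duration whose $E\downarrow 0$ limits depend only on the heteroclinic phase-portrait. The saddle passage is a near-rotation at frequency $\omega$ through an angle $\omega\tau(E)\sim -(\omega/\lambda)\ln E$, which produces the logarithmic oscillation of the trace.

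Concretely, I would fix a small $\delta>0$ and let $t_1<t_2$ be the times at which $p(t;E)$ enters and leaves the $\delta$-neighborhood of $p=\pi$ within one period. The local expansion $V(x)=-\tfrac{\lambda^2}{2}(x-\pi)^2+O((x-\pi)^3)$ inserted into (\ref{eq:period}) gives
\begin{equation*}
\tau(E,\delta) := t_2-t_1 = -\tfrac{1}{\lambda}\ln E+\tau_0(\delta)+o(1),\qquad T(E)-\tau(E,\delta) = T_0(\delta)+o(1),
\end{equation*}
so the bulk intervals have bounded length and, on them, $p(t;E)$ converges uniformly to a heteroclinic piece. Consequently the bulk Floquet factors $M_1(E,\delta), M_2(E,\delta)$ in $F_E=M_2\,M_{\mathrm{sad}}\,M_1$ possess limits $M_1^*(\delta), M_2^*(\delta)\in SL(2,\mathbb{R})$ as $E\downarrow 0$.

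On $[t_1,t_2]$ write the coefficient in (\ref{eq:plame}) as $\omega^2+\varepsilon(t)$ with $|\varepsilon(t)|\le C|p(t;E)-\pi|$. The inverted-harmonic approximation $p(t;E)-\pi\approx (\sqrt{2E}/\lambda)\sinh(\lambda(t-t^*))$ yields the $E$-uniform estimate $\int_{t_1}^{t_2}|\varepsilon(t)|\,dt\le C\delta$, after which variation of parameters gives
\begin{equation*}
M_{\mathrm{sad}}(E,\delta)=R_\omega\bigl(\tau(E,\delta)\bigr)+O(\delta),\qquad R_\omega(\tau)=\begin{pmatrix}\cos\omega\tau & \omega^{-1}\sin\omega\tau\\ -\omega\sin\omega\tau & \cos\omega\tau\end{pmatrix}.
\end{equation*}
Taking the trace of $M_2^*\,M_{\mathrm{sad}}\,M_1^*$ and using the identity $\mathrm{tr}(NR_\omega(\tau))=a(\delta)\cos(\omega\tau-\varphi(\delta))$ (with $N=M_1^*M_2^*\in SL(2,\mathbb{R})$), substituting the above asymptotics of $\tau(E,\delta)$, and absorbing $\delta$-dependent constants into the phase gives
\begin{equation*}
\mathrm{tr}\,F_E = a(\delta)\cos\!\bigl(\tfrac{\omega}{\lambda}\ln E-\widetilde\varphi(\delta)\bigr)+O(\delta)+o(1).
\end{equation*}
As $\delta\to 0$ the limits $a(\delta)\to a$ and $\widetilde\varphi(\delta)\to\varphi$ exist (they are determined by $\omega$ and by the heteroclinic-limit monodromies $\lim_{\delta\to 0}M_i^*(\delta)$), yielding (\ref{eq:normalform}). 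Finally, Theorem~1 forces $a\ge 2$: the trace must cross $\pm 2$ infinitely often as $E\downarrow 0$, which rules out any asymptotic sinusoid of amplitude strictly less than $2$.

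The main obstacle is the saddle-passage estimate. A pointwise bound $|\varepsilon|\le C\delta$ would yield only $\int|\varepsilon|\,dt=O(\delta|\ln E|)$, which diverges and destroys the monodromy approximation. The essential mechanism is that exponential escape from the saddle confines $|p-\pi|$ to values much smaller than $\delta$ over almost all of the saddle interval, so that the integrated perturbation is $O(\delta)$ \emph{uniformly} in $E$. Coordinating the joint $E\to 0$, $\delta\to 0$ limit---in particular, verifying that $\tau_0(\delta), T_0(\delta), a(\delta), \widetilde\varphi(\delta)$ have the right limits as $\delta\to 0$---is the delicate point on which the whole argument rests.
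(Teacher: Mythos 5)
Your decomposition $F_E=M_2\,M_{\mathrm{sad}}\,M_1$ and the saddle-passage step are sound: the bound $\int_{t_1}^{t_2}|\varepsilon(t)|\,dt=O(\delta)$ uniformly in $E$ is correct (it even follows without the $\sinh$ approximation, since $\int |p-\pi|\,dt=\int |p-\pi|\,dp/\dot p$ and $\dot p\geq c|p-\pi|$ near the nondegenerate maximum), and Gronwall in the rotating frame then gives $M_{\mathrm{sad}}=R_\omega(\tau(E,\delta))+O(\delta)$. The genuine gap is exactly the point you flag as delicate, and your stated justification for it is wrong: you claim $a(\delta)\to a$ and $\widetilde\varphi(\delta)\to\varphi$ because they are determined by $\lim_{\delta\to0}M_i^*(\delta)$, but these limits do not exist. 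As $\delta\to0$ the bulk intervals lengthen without bound, and along the heteroclinic the coefficient of (\ref{eq:plame}) tends to $\omega^2>0$, so the bulk monodromies keep rotating at frequency $\omega$ and diverge; simultaneously $\tau_0(\delta)\sim\frac{2}{\lambda}\ln(c\delta)\to-\infty$. The theorem lives precisely in the cancellation of these two divergences, i.e.\ in the convergence of the rotation-stripped bulk monodromies, and that convergence is not automatic: it holds only because $V''(p_0(t))-V''(\pi)$ decays exponentially (at rate $\lambda$) along the heteroclinic. This is what the paper's Lemma \ref{asyconst} (``exponential death'') supplies after conjugating by $e^{At}$, combined with the refined period asymptotics $T(E)=\frac{1}{\lambda}\ln\frac{1}{E}+K+o(E^0)$ of Lemma \ref{T} (your $\tau(E,\delta)$ expansion needs the analogous additive constant with its $\delta$-dependence tracked), and with the comparison $|p(t;E)-p_0(t)|=O(E^{\sqrt2/2})$ of Lemma \ref{proxy}, which is what lets one pass from the frozen $E=0$ coefficient to the true one over a window of length $O(\ln\frac1E)$ without the Gronwall factor blowing up.

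Two further remarks. Your appeal to Theorem 1 for $a\geq2$ is legitimate but unnecessary once the limiting matrix is in hand: with $\det N=1$ one gets $a^2=(n_{11}+n_{22})^2+(n_{21}/\omega-\omega n_{12})^2\geq4$ directly, which is how the paper argues. And your scheme could in principle be completed without reproducing Lemma \ref{asyconst}: for $\delta'<\delta$ both of your expansions approximate the same function $\mathrm{tr}\,F_E$ to within $O(\delta)+o_E(1)$ while $\frac{\omega}{\lambda}\ln E$ sweeps all phases, which forces $|a(\delta)-a(\delta')|+|\widetilde\varphi(\delta)-\widetilde\varphi(\delta')|=O(\delta)$ and hence convergence by a Cauchy argument; but some such argument must actually be supplied --- as written, the proposal stops exactly at the step that carries the content of the theorem.
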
  
 
  \begin{figure}[thb]
 	\captionsetup{format=hang}
	\center{  \includegraphics{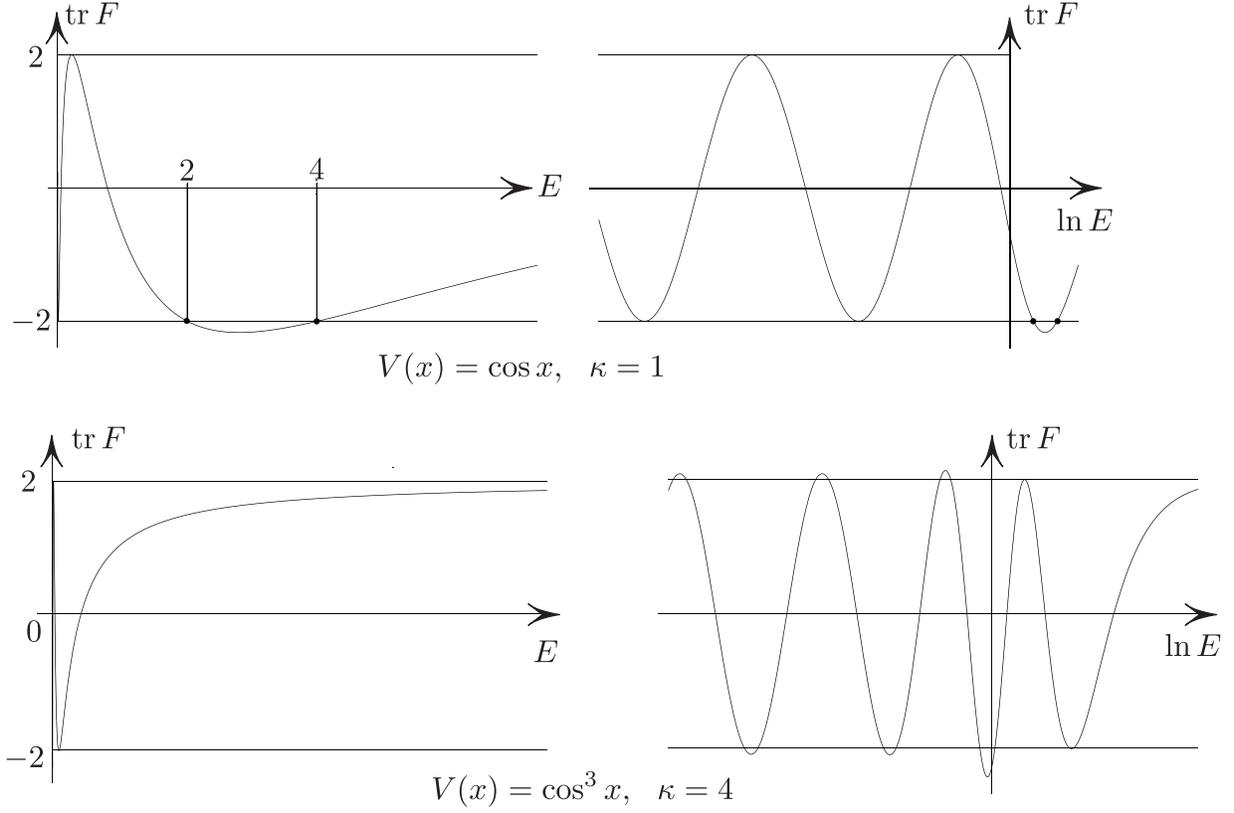}}
	\caption{For $ V(x)= \cos x$ all but one resonance intervals collapse; for $ V(x) = \cos ^3x $ the intervals open up.}
	\label{fig:traces}
\end{figure}  

In Section 2.2.1 we state the key lemmas and give a brief outline of the proof of Theorem \ref{logscale};   the details of the proof are given in Section 2.2.2, with the key lemmas assumed; these  lemmas are proven in Section 2.2.3.

\subsubsection{\bf Key lemmas}

The linearized equation 
\begin{equation} 
	\ddot w+ (2\kappa + V ^{\prime\prime} (p (t; E )) )w= 0  
	\label{eq:lingen}
\end{equation} 
has the periodic coefficient  of period $ T( E ) $ given by    (\ref{eq:period}) - the time it takes for $ p  $ to change from  $ - \pi $ to $\pi$. 

Our goal is to prove   (\ref{eq:normalform})  for the  $ T(E) $ -- advance matrix  $F= F_ E $ of the linear system associated with 
   (\ref{eq:lingen}): 
  \begin{equation}  
   \left\{ \begin{array}{l} 
  \dot w = u \\[3pt] 
   \dot u = - (2\kappa + V ^{\prime\prime} (p (t; E )) w. \end{array} \right.  
  \label{eq:lingens}
\end{equation}  

We state the following three key lemmas, proven later in Section 2.2.3.

 \begin{lemma}\label{T}   
 Assume that $V$ is  $ 2 \pi $-periodic and has a unique non-degenerate maximum at $ x=\pi ({\rm mod}2 \pi ) $, with $ V( \pi ) = 0 $ (as assumed throughout).  There exists a constant    $ K> 0$    depending on $V$   such that for small $ E > 0 $ 
\begin{equation} 
	 T(E)\equiv 2 \tau  (E)= \frac{1}{ \sqrt{ 2 } }  \int_{-\pi }^{\pi } \frac{dx}{ \sqrt{  -V(x) + E}  } = 
	\frac{1 }{  \lambda }  \ln \frac{1}{E} +K+ o( E^0),
	\label{eq:asyint}
\end{equation} 
 where  $ \lambda = \sqrt{ - V ^{\prime\prime} (\pi)} $.
 \end{lemma}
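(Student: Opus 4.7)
The strategy is to isolate the two endpoint singularities at $x=\pm\pi$ (where $V$ attains its maximum $0$) and compute their contribution explicitly from the quadratic Taylor model of $V$, absorbing all lower-order terms into the constant $K$.

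First I would fix a small $\delta>0$ and split
\begin{equation*}
\sqrt{2}\,T(E) \;=\; \int_{-\pi}^{-\pi+\delta}\frac{dx}{\sqrt{E-V(x)}} \;+\; \int_{-\pi+\delta}^{\pi-\delta}\frac{dx}{\sqrt{E-V(x)}} \;+\; \int_{\pi-\delta}^{\pi}\frac{dx}{\sqrt{E-V(x)}}.
\end{equation*}
On the middle interval $-V(x)$ is bounded below by a positive constant, so that integral converges as $E\downarrow 0$ to a finite constant $C_\delta$ (with error $O(E)$ by dominated convergence). Both boundary integrals are handled identically, so I focus on the one near $x=\pi$.

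Substituting $y=\pi-x$ and using assumption (2), write
\begin{equation*}
V(\pi-y) \;=\; -\tfrac{\lambda^{2}}{2}y^{2} + g(y), \qquad g(y)=O(y^{3}) \text{ as } y\to 0,
\end{equation*}
where $\lambda=\sqrt{-V''(\pi)}$. Then
\begin{equation*}
\int_{\pi-\delta}^{\pi}\frac{dx}{\sqrt{E-V(x)}} \;=\; \int_{0}^{\delta}\frac{dy}{\sqrt{\tfrac{\lambda^{2}}{2}y^{2}-g(y)+E}}.
\end{equation*}
I now add and subtract the quadratic-model integrand:
\begin{equation*}
\int_{0}^{\delta}\frac{dy}{\sqrt{\tfrac{\lambda^{2}}{2}y^{2}+E}} \;+\; \int_{0}^{\delta}\!\left[\frac{1}{\sqrt{\tfrac{\lambda^{2}}{2}y^{2}-g(y)+E}}-\frac{1}{\sqrt{\tfrac{\lambda^{2}}{2}y^{2}+E}}\right]dy.
\end{equation*}
The first integral is elementary: it equals $\tfrac{\sqrt{2}}{\lambda}\sinh^{-1}\!\bigl(\tfrac{\lambda\delta}{\sqrt{2E}}\bigr)=\tfrac{\sqrt{2}}{\lambda}\bigl[\ln(\sqrt{2}\lambda\delta)-\tfrac{1}{2}\ln E\bigr]+o(1)$ as $E\downarrow 0$. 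After combining the two endpoints and multiplying by the prefactor $\tfrac{1}{\sqrt{2}}$ from $T(E)$, this yields precisely the advertised logarithmic term $\tfrac{1}{\lambda}\ln(1/E)$ together with an explicit constant.

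The main technical point is to verify that the bracketed "remainder" integral has a finite limit as $E\downarrow 0$, contributing only to the constant $K$. Rationalizing its numerator gives
\begin{equation*}
\frac{g(y)}{\sqrt{\tfrac{\lambda^{2}}{2}y^{2}-g(y)+E}\,\sqrt{\tfrac{\lambda^{2}}{2}y^{2}+E}\,\Bigl[\sqrt{\tfrac{\lambda^{2}}{2}y^{2}-g(y)+E}+\sqrt{\tfrac{\lambda^{2}}{2}y^{2}+E}\Bigr]},
\end{equation*}
whose modulus is bounded (uniformly in $E\geq 0$ and $y\in(0,\delta]$, after shrinking $\delta$) by $C|g(y)|/y^{3}$, which is integrable near $0$ by $g(y)=O(y^{3})$. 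Dominated convergence then gives a finite $E\downarrow 0$ limit, and adding this to $C_\delta$ and the constant produced by $\sinh^{-1}$ defines $K$. This bookkeeping—ensuring $\delta$-independence of the final $K$ and uniform integrable domination of the remainder—is the most delicate step, but it is routine once the decomposition above is in place.
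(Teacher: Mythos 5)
Your decomposition is essentially the paper's argument: compare the integrand with the quadratic model $\tfrac{\lambda^2}{2}(\pi-x)^2+E$ near the maximum, evaluate the model integral in closed form to produce the $\tfrac{1}{\lambda}\ln\tfrac{1}{E}$ term, and show that the difference tends to a finite limit which is absorbed into $K$. The differences are mostly technical: you localize with a $\delta$-cutoff and control the remainder by rationalizing and dominated convergence, while the paper subtracts the model on all of $[0,\pi]$ and argues that the integrand converges monotonically to the bounded continuous limit $f(x)=\frac{1}{\sqrt{-V(x)}}-\frac{\sqrt{2}}{\lambda(\pi-x)}$; either way one gets the same $K$. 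One point you should make explicit: your bound $g(y)=O(y^3)$ is not a consequence of the standing hypothesis $V\in C^{2}$. Taylor's theorem at a $C^{2}$ point gives only $g(y)=o(y^2)$, in which case your dominating function $C|g(y)|/y^{3}=o(1/y)$ need not be integrable near $0$; indeed for a genuinely $C^{2}$ potential with $V(\pi-y)=-\tfrac{\lambda^2}{2}y^2+\tfrac{y^2}{\ln(1/y)}$ the remainder integral diverges and the expansion acquires an extra $\ln\ln(1/E)$-type term, so the constant-$K$ asymptotics really do require a modulus of continuity for $V''$ at the maximum (e.g. $V\in C^{3}$ or $C^{2,\alpha}$ near $\pi$). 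The paper's own proof tacitly uses the same strengthening when it asserts that $f$ is bounded at $x=\pi$, so your argument is at the same level of rigor as the published one; just state the regularity you are actually invoking. The remaining bookkeeping (both endpoints, $\delta$-independence of $K$, the $o(E^{0})$ errors) is routine, as you note.
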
 
 
\begin{lemma} [``exponential death"]\label{asyconst}
Assume that the coefficient matrix of the matrix ODE $ \dot C = G(t) C $ decays exponentially in both future and the past: there exist   $ c> 0 $ and $  \lambda  > 0 $ such that 
\begin{equation} 
	\Vert G(t) \Vert \leq c e^{- \lambda  |t|} \  \  \hbox{for all }  \  \  t\in (- \infty , \infty ). 
	\label{eq:zero}
\end{equation}   
 Then there exists a constant matrix $N$ and a constant $b$ depending only on $G$  such that any fundamental solution matrix $ C$  satisfies 
\begin{equation} 
	\Vert  C(t)C ^{-1} (-t)   - N \Vert\leq b e^{- \lambda t} \  \  \hbox{for all}  \  \  t>0.
	\label{eq:transition}
\end{equation}   
In other words, the time-advance map $ C(t)C ^{-1} (-t)  $ from $ -t $ to $t$ approaches $N$ exponentially as $ t \rightarrow \infty $. 
\end{lemma}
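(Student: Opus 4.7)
The plan is to exploit the integrability of $\|G(t)\|$ on the whole line (implied by its two-sided exponential decay) to show that any fundamental solution matrix $C(t)$ stabilizes to constant matrices $C_+$ at $+\infty$ and $C_-$ at $-\infty$ at rate $e^{-\lambda t}$, and then take $N=C_+ C_-^{-1}$. The key is that the coefficient of the ODE in the asymptotic regime is so small that the equation is essentially $\dot C=0$, with error that can be integrated up against the exponential tail.

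First I would use Gronwall to get a uniform bound $\|C(t)\|\le M:=\|C(0)\|\exp(2c/\lambda)$ for all $t\in\mathbb R$, using the hypothesis $\|G(t)\|\le ce^{-\lambda|t|}$ and $\int_{-\infty}^\infty ce^{-\lambda|s|}\,ds=2c/\lambda$. By Liouville's formula, $|\det C(t)|=|\det C(0)|\exp\bigl(\int_0^t\operatorname{tr}G\,ds\bigr)$ stays bounded above and below away from $0$, so $C^{-1}(t)$ exists and is likewise uniformly bounded by some $M'$. Next, for $t>s\ge 0$ the integral form $C(t)-C(s)=\int_s^t G(\sigma)C(\sigma)\,d\sigma$ gives
\begin{equation*}
\|C(t)-C(s)\|\le M\int_s^t ce^{-\lambda\sigma}\,d\sigma\le\frac{Mc}{\lambda}e^{-\lambda s},
\end{equation*}
so $C(t)$ is Cauchy as $t\to+\infty$, producing a limit $C_+$ with $\|C(t)-C_+\|\le (Mc/\lambda)e^{-\lambda t}$. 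The symmetric argument on $(-\infty,0]$ yields $C_-=\lim_{t\to\infty}C(-t)$ with the same rate, and by continuity of $\det$ we have $\det C_-\ne 0$, so $N:=C_+ C_-^{-1}$ is well-defined.

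To pass from $C(-t)\to C_-$ to $C^{-1}(-t)\to C_-^{-1}$, I would use the identity
\begin{equation*}
C^{-1}(-t)-C_-^{-1}=C^{-1}(-t)\bigl(C_- -C(-t)\bigr)C_-^{-1},
\end{equation*}
which combined with the uniform bound $\|C^{-1}(-t)\|\le M'$ gives $\|C^{-1}(-t)-C_-^{-1}\|\le M'\|C_-^{-1}\|(Mc/\lambda)e^{-\lambda t}$. Writing
\begin{equation*}
C(t)C^{-1}(-t)-N=C(t)\bigl(C^{-1}(-t)-C_-^{-1}\bigr)+\bigl(C(t)-C_+\bigr)C_-^{-1}
\end{equation*}
and applying triangle inequality with the uniform bounds produces the desired estimate $\|C(t)C^{-1}(-t)-N\|\le be^{-\lambda t}$ for a constant $b$ depending only on $c$, $\lambda$, and the bounds $M,M'$, i.e.\ only on $G$ (note that the product $C(t)C^{-1}(-t)$ is independent of the particular fundamental matrix chosen, since replacing $C$ by $CP$ cancels $P$ and $P^{-1}$; therefore $N$ is an invariant of $G$).

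The only nontrivial point to verify carefully is the invertibility of $C_-$ (equivalently, the uniform lower bound on $|\det C(t)|$), which I expect to be the one place where a bit of argument is needed; everything else is a direct Gronwall/Cauchy estimate controlled by the tail $\int_t^\infty ce^{-\lambda s}\,ds$. No finer structure of $G$ is used beyond the two-sided exponential decay bound.
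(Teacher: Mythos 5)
Your proof is correct and follows essentially the same route as the paper: a Gronwall bound for $\|C(t)\|$, exponential stabilization of the solution at $\pm\infty$ via the integrable tail of $\|G\|$, and composition of the two limits to form $N$. The only difference is cosmetic: where the paper simply says ``similarly'' for the convergence of $C^{-1}(-t)$, you justify the uniform invertibility via Liouville's formula and the identity $C^{-1}(-t)-C_-^{-1}=C^{-1}(-t)\bigl(C_--C(-t)\bigr)C_-^{-1}$, which is a welcome filling-in of detail rather than a new idea.
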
 

\begin{lemma}\label{proxy}
There exists some constant $ c> 0 $ such that 
\begin{equation} 
	| p(t ) - p_0(t) | \leq c E^ \frac{ \sqrt{ 2 } }{2}     \  \  \hbox{for}  \  \ | t | \leq \tau ( E ), 
	\label{eq:p}
\end{equation}  
where $ p(t) = p(t; E ) $ and $ p_0(t)=p(t; 0) $. \end{lemma}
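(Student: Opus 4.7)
The plan is a perturbative comparison between the orbit $p(\,\cdot\,;E)$ and the heteroclinic $p_0$, using the common initial condition $p(0;E) = p_0(0) = 0$ and exploiting the hyperbolic structure at the saddle $p = \pi$ whose unstable rate is $\lambda = \sqrt{-V''(\pi)}$. Set $q(t) := p(t;E) - p_0(t)$. Since both $p(\,\cdot\,;E)$ and $p_0$ solve $\ddot x + V'(x) = 0$, Taylor expansion of $V'$ about $p_0(t)$ gives
\[
\ddot q + V''(p_0(t))\, q = -\tfrac{1}{2}V'''(\xi(t))\, q^2 =: R(t,q), \qquad |R(t,q)| \le C q^2,
\]
and energy conservation at $t = 0$ supplies the initial data
\[
q(0) = 0, \qquad \dot q(0) = \sqrt{2(E - V(0))} - \sqrt{-2V(0)} = O(E) \text{ as } E \downarrow 0.
\]
The problem is thereby reduced to a nonlinear ODE for $q$ with $O(E)$ initial data and a linear part that is hyperbolic at infinity.

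A fundamental pair for the linearized equation $\ddot u + V''(p_0(t))u = 0$ is given by $u_1(t) = \dot p_0(t)$ (differentiate the heteroclinic identity $\ddot p_0 + V'(p_0) = 0$ once) together with a reduction-of-order companion $u_2$. Because $p_0(t) \to \pm\pi$ exponentially with rate $\lambda$, one has $u_1(t) \asymp e^{-\lambda|t|}$ and $u_2(t) \asymp e^{\lambda|t|}$ for large $|t|$, with nonzero constant Wronskian $W$. Variation of parameters yields the Duhamel representation
\[
q(t) = A\, u_1(t) + B\, u_2(t) + \frac{1}{W}\int_0^t \bigl[u_2(t) u_1(s) - u_1(t) u_2(s)\bigr] R(s, q(s))\, ds,
\]
where $A, B = O(E)$ are determined by the initial data above.

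A bootstrap now propagates the ansatz $|q(s)| \le M\, E\, e^{\lambda|s|}$ on $[0, t] \subset [0, \tau(E)]$. The Duhamel kernel behaves like $e^{\lambda(t-s)}$ for $0 \le s \le t$, so the nonlinear contribution is at most $O(E^2 e^{2\lambda t})$, strictly dominated by the linear part $O(E e^{\lambda t})$ as long as $E\, e^{\lambda \tau(E)} \to 0$. By Lemma~\ref{T}, $\tau(E) = \tfrac{1}{2\lambda}\ln(1/E) + O(1)$, so $e^{\lambda \tau(E)} = O(E^{-1/2})$ and $E\, e^{\lambda \tau(E)} = O(E^{1/2}) \to 0$; the bootstrap closes and we obtain
\[
|q(t)| \le c\, E\, e^{\lambda|t|}, \qquad |t| \le \tau(E),
\]
which, combined with $e^{\lambda\tau(E)} = O(E^{-1/2})$, delivers the claimed power of $E$. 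The bound for $t < 0$ follows either by rerunning the same estimate backwards or by time-reversal symmetry of the autonomous equation $\ddot x + V'(x) = 0$.

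The chief obstacle is closing the bootstrap in this marginal regime: the natural blow-up time of the linearization is $\tfrac{1}{\lambda}\ln(1/E)$, whereas the window $|t| \le \tau(E)$ is exactly one half of it, leaving only an $O(E^{1/2})$ safety margin against the quadratic feedback. Exploiting the decay of $u_1(s)$ inside the Duhamel kernel is essential for sharp control; a naive Gronwall bound ignoring this decay would accumulate enough growth to swamp the linear part over $[0,\tau(E)]$.
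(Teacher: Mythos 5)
Your proposal reaches the right conclusion but by a genuinely different route from the paper. The paper never passes to the second-order variational equation: it works with the energy (first-order) form $\dot p=\sqrt{2(E-V(p))}$, $\dot p_0=\sqrt{-2V(p_0)}$, gets $|p-p_0|=O(E)$ by a comparison/Gronwall estimate up to the time $t_a$ at which $p_0$ enters a fixed neighborhood of the saddle, and then, near the saddle, introduces the time-shifted solution $p_1(t)=p(t-\delta)$ with $\delta=O(E)$ and shows by a trapping/monotonicity argument (the chord joining $(p_0,\dot p_0)$ to $(p_1,\dot p_1)$ stays in a sector, using that $V'$ is monotone on $[\pi-a,\pi]$) that the chord's slope stays positive, which gives $|p_1-p_0|=O(\sqrt E)$ on $[t_a,\tau]$. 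You instead linearize about the heteroclinic, use the explicit fundamental pair $u_1=\dot p_0\asymp e^{-\lambda|t|}$, $u_2\asymp e^{\lambda|t|}$, and close a Duhamel bootstrap $|q|\le ME\,e^{\lambda|t|}$ over the half-window $\tau(E)=\tfrac{1}{2\lambda}\ln(1/E)+O(1)$; your accounting of the $O(E^{1/2})$ safety margin is correct, and the backward-time bound is indeed obtained by rerunning the estimate (note the ``time-reversal symmetry'' alternative needs $V$ even, which is not assumed). Your route is more standard and more robust; the paper's is more elementary and stays within its stated regularity.

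Two caveats. First, your remainder bound $|R|\le Cq^2$ uses $V'''$, which does not exist under the paper's standing assumption $V\in C^{2}$; with only $V''$ continuous the remainder is merely $o(|q|)$, and since the Duhamel integral of $e^{\lambda(t-s)}\cdot E e^{\lambda s}$ over $[0,\tau]$ produces an extra factor $\tau\sim\ln(1/E)$, a bare modulus of continuity of $V''$ need not close the bootstrap: you need $V''$ H\"older (or Dini) continuous, or $V\in C^3$. The same issue touches the exact rates $u_1\asymp e^{-\lambda|t|}$, $u_2\asymp e^{\lambda|t|}$ (under plain $C^2$ one only gets rates $\lambda\mp\epsilon$, which still yields a positive power of $E$); the paper's own proof of this lemma, by contrast, genuinely only needs $C^2$. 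Second, your final bound is $cE\,e^{\lambda|t|}=O(E^{1/2})$, which is \emph{not} the stated $O(E^{\sqrt2/2})$ (for small $E$ the exponent $\sqrt2/2>1/2$ makes the stated bound stronger), so you have not ``delivered the claimed power''; this is a discrepancy of the paper itself as well, since its own proof also ends with $O(\sqrt E)$, and $O(\sqrt E)$ is all that is used later in the proof of Theorem 2, but it should be flagged rather than claimed.
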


Based on the three key lemmas whose proofs are presented in Section 2.2.3, the   proof of Theorem \ref{logscale} proceeds in two steps:

 \noindent{\bf Step 1 - outline.}  We replace $ p( t, E ) $ in   (\ref{eq:lingen})  with $ p_0(t)=p( t, 0 ) $, the heteroclinic solution (corresponding to $ E=0 $) which approaches  $\pm \pi $  as $ t \rightarrow \pm  \infty $:
 \begin{equation} 
	\ddot w+ (2\kappa + V ^{\prime\prime} (p_0(t))w= 0  
	\label{eq:lingen0}
\end{equation} 
 and consider first the time advance matrix $ F^0_ E $  for  the modified system  (\ref{eq:lingen0}) but over the  period $ T(E) $  of the unmodified system  (\ref{eq:lingen}).  We note that $ F^0_ E $ depends on $E$ only through $\tau = \tau (E) $, whereas $ F_E $ depends on $E$ in one additional way, namely through the depenence of  coefficient matrix  on $ p(t;E) $. {\it  We will prove  (\ref{eq:normalform}) for $ F_E^0 $.}  A crucial use will be made of the fact that the coefficient in  (\ref{eq:lingen0}) approaches a positive constant at a sufficiently fast exponential rate (c.f. Lemma \ref{asyconst}).

\noindent {\bf Step 2 - outline.} We  will show that  $ \Vert  F_ E  - F_ E ^0 \Vert=o(E^0 ) $; together with Step 1 this would imply   (\ref{eq:normalform}) thus completing the proof of the theorem.\\

In the next Section 2.2.2, we present the details of the proof of Theorem \ref{logscale} based on the three key lemmas stated in this section. The proofs of these key lemmas can be found in Section 2.2.3.

\subsubsection{\bf Proof of Theorem \ref{logscale}}
Based on the three key lemmas in Section 2.2.1, we now prove Theorem \ref{logscale}.

\begin{proof} 
\noindent Following the idea of Step 1 outlined above,  
let us write the system  (\ref{eq:lingen0})  in vector form, splitting the coefficient matrix into a constant part and the part  that decays at  infinity:  
\begin{equation} 
	\dot z = (A+ B(t)) z,  
	\label{eq:lingenvec}
\end{equation}   
where  
\[
	A = \left( \begin{array}{cc} 0 & 1 \\ - \omega^2  & 0 \end{array} \right) , \  \   
\  \  
	B(t) = \left( \begin{array}{cc} 0 & 0 \\ - \lambda ^2 -V ^{\prime\prime}  (p_0(t))& 0 \end{array} \right), 
\]   
where $\omega$ and $\lambda$ were defined in the statement of the theorem. Now $ 0< \pi-|p_0(t)| < ce^{- \lambda | t|} $ for some $ c>0 $ and for all $t$  since 
 $ - \lambda $ is the stable eigenvalue of the saddle in the phase plane of $ \ddot x + V ^\prime (x) = 0 $. Thus $ | V ^{\prime\prime} (\pi ) - 
 V ^{\prime\prime} (p_0(t)) | \leq  ce^{- \lambda | t|}$ and hence 
   the (say) Frobenius norm of $B$ decays at infinity:
\begin{equation} 
	\Vert B(t) \Vert  \leq  c  e^{- \lambda  | t | }  \  \  \hbox{for all}  \  \   t\in {\mathbb R}.
	\label{eq:B}
\end{equation}  

As stated in the outline, we define the  Floquet matrix $ F^0_E$ as the  $T( E ) $--advance map of  the  linearization (\ref{eq:lingenvec}) around the heteroclinic solution, i.e. one corresponding to $ E=0 $,  from time   $ t=- \tau $ to $ t=\tau $, where $ \tau = T( E )/2 $. Letting $X$ be the fundamental solution matrix 
 of   (\ref{eq:lingenvec})  we have  
\begin{equation} 
	F^0_E{=} X(\tau) X ^{-1} (- \tau ) 
	\label{eq:F0}
\end{equation}  

 Indeed, assuming $ X(0)=I $, $ X(- \tau ) $ propagates an initial condition vector from $t=0$ to $t= - \tau $; thus $ X ^{-1} (- \tau )$ propagates from $ - \tau $ to $0$. And $ X( \tau ) $ propagates from $0$ do $\tau$; this yields   (\ref{eq:F0}).  To estimate this product, we strip off the elliptic-rotational part of $X$ by introducing  the matrix function $ C(t) $ via 
\begin{equation} 
	X = e^{At}C. 
	\label{eq:fsm}
\end{equation} 

Substitution    into   (\ref{eq:lingenvec})  shows that $C$  satisfies the ODE 
\[
	\dot C = e^{-At} B(t)  e^{At} C. 
\]  
Note that $ e^{At} $ is an elliptic rotation (since $ \omega ^2 = 2 \kappa + V ^{\prime\prime} (\pi) > 0 $), and thus is bounded, together with its inverse, for all $t$, so that   the coefficient matrix decays at infinity: 
\[
	\Vert e^{-At} B(t)  e^{At} \Vert \leq c e^{- \lambda | t | } ,  
\]  
as follows from   (\ref{eq:B}). 

According to Lemma \ref{asyconst}  this property implies  existence of  a constant matrix $N$ such that    
\[
	C(\tau )  C ^{-1} ( - \tau ) \buildrel{  (\ref{eq:transition}) }\over{=}  N+ O ( e^{- \lambda \tau } );  
\]  
 here and throughout  $ O(f(t)) $ denotes a function whose absolute value is bounded 
  by $ c f(t) $ for all $t$, for some constant $c$   independent of $t$ (and $E$). Substituting   (\ref{eq:fsm}) into   (\ref{eq:F0}) 
  and using the last estimate we obtain
\[
	F^0_E=e^{A\tau}N e^{A\tau} + O ( e^{- \lambda \tau } );
\]  
where we again used the boundednes of $ e^{At} $ for all $ t\in {\mathbb R}  $. 

A simple calculation shows that 
\begin{equation} 
	{\rm tr}\,  (e^{A\tau}N e^{A\tau}) =  (n_{11}+n_{22}) \cos 2 \omega \tau + 
	\biggl( \frac{1}{\omega }n_{21}- \omega n_{12} \biggl)   \sin2 \omega \tau, 
	\label{eq:tr}
\end{equation}   
where $ n_{ij} $ are the elements of $N$, so that 
\begin{equation} 
	{\rm tr}\, F^0_E = a \cos (2 \omega \tau - c_1 ) + O ( e^{- \lambda \tau } ).
	\label{eq:F1}
\end{equation}  

We observe that   $  {\rm det}\; N = 1 $, as follows from    the Hamiltonian character of our linear systems. Using this in   (\ref{eq:tr})   implies that the amplitude $ a\geq 2 $. 
We have 
\[
	2 \omega \tau = \omega T   \buildrel{\rm {Lemma} \ref{T} }\over{=}   
	\omega   
	\biggl( \frac{ 1}{\lambda  } \ln \frac{1}{E } + K+o(E ^0) \biggl) ,  
\]  
so that (\ref{eq:F1})  becomes 
\[
	{\rm tr}\, F^0_E=a \cos \biggl(  \frac{\omega }{\lambda }  \ln  E   - K \biggl)   + o(E ^0 ) . 
\]  
 \marginpar{stopped here}  
In accordance with  Step 2 mentioned before,  to complete the proof of Theorem \ref{logscale} it remains to show that  $ \Vert F_E- F_E^0 \Vert = o ( E ^0) $. 

We show that Lemma \ref{proxy} implies  $ \Vert F_E- F_E^0 \Vert = o ( E ^0) $. Recall that  $F_E$ is expressed in terms of the fundamental solution matrix $ X_E(t) $  of $ \dot z = L(t;E) z $ with 
\[
	L(t;E) = \left( \begin{array}{cc} 0 & 1 \\ -2 \kappa - V ^{\prime\prime} (p(t;E)) & 0\end{array} \right) 
\]  
via $ F_E= X_E( \tau ) X_E ^{-1} ( \tau ) $. Similarly,  $ F_E^0= X_0( \tau ) X_0 ^{-1} ( \tau ) $ (this is just a repetition of     (\ref{eq:F0})). It therefore suffices to prove that 
\begin{equation} 
	\sup_{t\in [0, \tau]} \Vert X_E(t) - X_0(t) \Vert = o ( E^0) . 
	\label{eq:xx}
\end{equation}   
To that end we subtract $ \dot X_0 = L_0 X_0 $ from $ \dot X_E = L_E X_E$ (where we abbreviated $ L(t; E) $ as $ L_E $) and obtain  
\[
	\frac{d}{dt} ( X_E-X_0)= L_0\,(X_E-X_0) + (L_E-L_0)X_E,  
\]  
or 
\[
	X_E(t)-X_0(t) = \int_{0}^{t}  \underbrace{ X_0(t) X_0 ^{-1} ( s)}_{\leq M} \,\underbrace{ (L_E(s)-L_0(s))}_{\leq c E ^{  \sqrt{2}/2} } X_E(s) ds, 
\]  
for all $ t\in [0, \tau ] $, where $M$ is a constant independent of $E$ and in the above estimate  of $ L_E-L_0 $ we have applied Lemma \ref{proxy}. This implies that for all sufficiently small $E$  
\[
	|X_E-X_0 |_{C^0[0, \tau  ]}\leq \tau Mc E ^{\sqrt{2}/2}  \, |X_E |_{C^0[0,  \tau]}\leq c_1 E^ {\sqrt{2}/4}|X_E |_{C^0[0, \tau ]}  
\]  
where we used  $\tau = O( \ln E ^{-1} ) $. This implies  (\ref{eq:xx}), and hence conclude the proof. 
\end{proof}

\subsubsection{\bf Proof of key lemmas}
In this section we prove the key lemmas stated in Section 2.2.1.

\begin{proof}[Proof of Lemma \ref{T}]
Let us replace $V  $ by  its leading Taylor term and consider the resulting change in the integral, considering first the interval $ [0, \pi ] $:  
\begin{equation} 
	\int_{0}^{\pi } \biggl(  \frac{ 1}{ \sqrt{  -V(x) + E}  } - 
	  \frac{ 1 }{ \sqrt{ \frac{1}{2} \lambda ^2 (x-\pi ) ^2 + E}  } \biggl)\;  dx.
	  \label{eq:difint}
\end{equation}  
  As $E \downarrow 0 $, the  integrand converges pointwise on $ [0, \pi ] $  to the function 
 \begin{equation} 
	f (x) = \frac{ 1}{ \sqrt{  -V(x) }  } -  \frac{ \sqrt{ 2 } }{\lambda   (\pi -x)    },  \  \  x\in [0, \pi ], 
	\label{eq:f}
\end{equation}  
defined at $ x=\pi  $ by continuity;  $f$  is {\it  bounded and continuous }   (singularities cancel at $ x=\pi  $,  the key point of the proof),  and convergence is monotone at every $x$ (increasing or decreasing 
 depending on the sign of the Taylor remainder). We conclude that   (\ref{eq:difint})  converges to $ \int_{0}^{\pi} f\;dx $. Now  the integral of the second term in the   integrand of   (\ref{eq:difint})  
 is computed explicitly  as 
 \[
 	 \frac{\sqrt{ 2 }}{ \lambda  }\biggl(\frac{1}{2}  \ln\frac{1}{ E} + 
	  \ln \bigl( 2 \pi  \lambda  \bigl)  \biggl)  + O( E ) ,
\]
and therefore  convergence of    (\ref{eq:difint})  to $ \int_{0}^{\pi} f\;dx $ implies   
\[
	\int_{0}^{\pi }   \frac{ 1}{ \sqrt{  -V(x) + E}  } =  
	\frac{\sqrt{ 2 }}{  \lambda  }\biggl(\frac{1}{2}  \ln\frac{1}{ E} + 
	  \ln \bigl( 2 \pi  \lambda  \bigl)  \biggl) 
	+ \int_{0}^{\pi } f (x) \;dx+O ( E ^0 ).  
\]  
 The integral over $ [- \pi, 0 ] $ is estimated similarly; adding the two estimates yields   (\ref{eq:asyint}), with 
 \[
	K =  \frac{2\sqrt{ 2 }}{ \lambda } \ln \bigl( 2 \pi  \lambda  \bigl)+\int_{-\pi}^{\pi } f(x) \; dx, 
\]  
 where $f$  was defined above in terms of $V$. 
\end{proof}

\begin{proof}[Proof of Lemma \ref{asyconst}]
Since $ C(t) C ^{-1} (t) $ is independent of the choice of the fundamental solution matrix, we lose no  generality by assuming  $ C(0)=I $.  
\[
	\biggl|\frac{d}{dt} \Vert C  \Vert \biggl| \leq  \Vert  \dot C \Vert  = \Vert 	GC\Vert\leq c e^{- \lambda | t | } \Vert    C \Vert; 
\]  
 this proves boundedness: $ \Vert    C(t) \Vert\leq c/ \lambda  $ for all $ t\in {\mathbb R}  $.  Now
 \begin{equation} 
	C(t) = I + \int_{0}^{t} G(s) C(s) \;ds. 
	\label{eq:int}
\end{equation}   
Since $C$ is bounded and $G$ decays exponentially, the improper integral in 
\[
	I + \int_{0}^{\infty} G(s) C(s) \;ds \buildrel{def}\over{=} N_+
\]  
converges; and 
\begin{equation} 
	\Vert   N_+ - C(t)  \Vert  =   \Big\Vert  \int_{t}^{\infty } G(s) C(s) \;ds \Big\Vert \leq \int_{t}^{\infty } c e^{- \lambda  s}\frac{c}{ \lambda } \;ds = 
	\frac{c ^2 }{\lambda^2 } e^{- \lambda t}.
	\label{eq:-}
\end{equation} 

Similarly one shows that  there exists a constant matrix $N_-$ such that  $ C ^{-1} (-t) \rightarrow   N_- $ as $ t \rightarrow \infty$ at the same exponential rate $\lambda$ . 
Together with   (\ref{eq:-})  his implies   (\ref{eq:transition}) with $ N=N_+N_- $  and completes the proof of the lemma. 
\end{proof}

\begin{figure}[thb]
 	\captionsetup{format=hang}
	\center{  \includegraphics{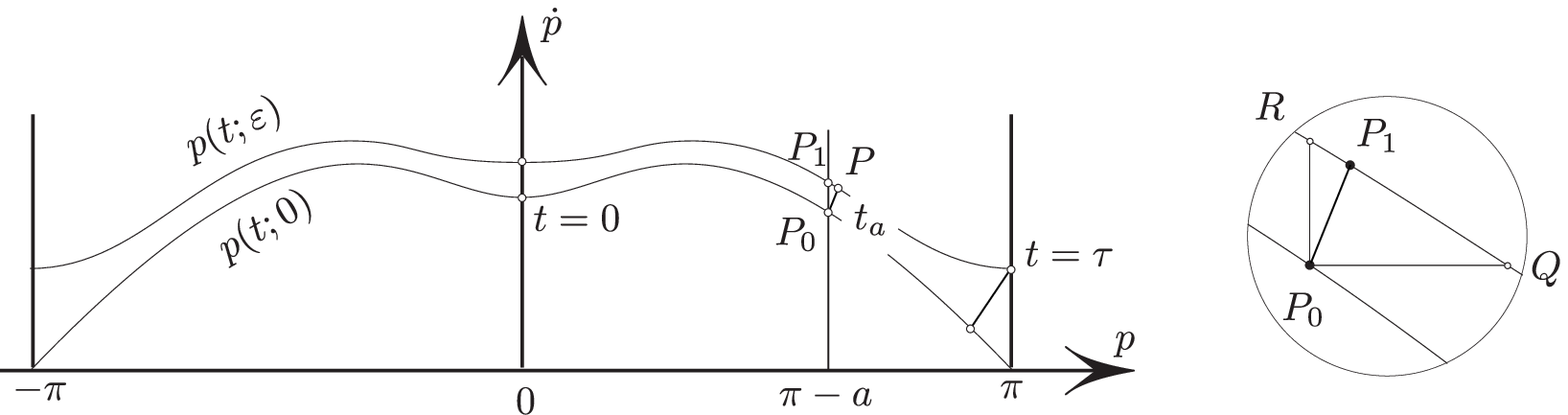}}
	\caption{Proof of   (\ref{eq:p}).}
	\label{fig:p}
\end{figure}

\begin{proof}[Proof of Lemma \ref{proxy}]
 \noindent {\bf  1.}  We first note that $ p(t)>p_0(t) $ for all $ t> 0 $; indeed, $ p(0)=p_0(0) $ and $p$, $ p_0 $ satisfy   
 \begin{equation} 
	\dot p =  \sqrt{ 2(E-V(p)) } , \  \  \dot p _0=  \sqrt{ 2(-V(p_0)) }, 
	\label{eq:odep}
\end{equation}  
and the claim follows from the comparison theorem for the first order ODEs. 

\noindent {\bf 2.}  From $V^\prime ( \pi ) =0$ and $ V^{\prime\prime} ( \pi ) < 0 $ we conclude that $ V ^\prime $ is monotone increasing in some interval $ [\pi -a, \pi ] $, $ a> 0 $.  Define $ t_a $ by $ p_0(t_a) = \pi -a $( Figure~\ref{fig:p}). First we prove that  (\ref{eq:p}) holds for $ t\in [0,t_a] $. 
  Subtracting the second equation in  (\ref{eq:odep})  from the first we obtain, using the mean value theorem:
 \[
	\bigg|\frac{d}{dt} (p-p_0)\bigg|\leq \alpha  (p-p_0)  +\beta E,  
\]  
where $\alpha $ and $\beta $ are the bounds on the partial derivatives of $  \sqrt{ 2(H-V(x)) } $ with respect to $H$ and $x$ over $ x\in [0, \pi -a/2] $, $ H\in [0,E]$; a short calculation gives
\[
	\alpha =\sup_{ x\in [0, \pi -a/2] } \frac{|V ^\prime(x) |}{ \sqrt{ -2V(x)}}  , \  \  \beta  = \sup_{ x\in [0, \pi -a/2] } \frac{1}{ \sqrt{ -2V(x) } } 
\]  
 We conclude that 
 \begin{equation} 
	| p(t)-p_0(t) | \leq\int_{0}^{t_a} e^{\alpha (t- s)} \beta E ds< cE \  \  \hbox{for all}  \  \    t\in [0, t_a], 
	\label{eq:prelim}
\end{equation} 
where $ c =  \beta (e^{\alpha t_a}-1 )/\alpha  $. It remains to prove   (\ref{eq:p}) for the remaining time $ t\in [t_a, \tau ] $. 

\noindent {\bf  3.} Consider the time-shifted solution  $ p_1(t) = p(t- \delta ) $ where $\delta$ is  defined  by $  p(t_a- \delta )= p_0(t_a) = \pi -a$.   Note that $ \delta = O (E) $ as follows from  $ | p(t_a) - p(t_a- \delta ) | =   | p(t_a) - p_0(t_a ) |= O ( E) $ (according to   (\ref{eq:prelim})) and the fact that $\dot p $ is bounded away from $0$ in the region in question.   From $ \delta = O ( E) $ it follows that     $ | p(t) - p_1(t) | < c E $ for  $ t\in [t_a, \tau ] $ since $ \dot p $ is bounded.  

Because of this proximity of $ p $ and $ p_1 $ is suffices to prove  (\ref{eq:p})   with $ p $ replaced by   $ p_1$. To that end we consider the segment $ P_0P_1 $ (Figure~\ref{fig:p}) connecting the two phase points $ P_0=(p_0, \dot p_0 )$ and  $ P_1=(p_1, \dot p_1 )$, and its slope 
 \begin{equation} 
	s   = \frac{\dot p _1- \dot p_0}{p_1-p_0}.   
	\label{eq:slope}
\end{equation}   
We show that $ s(t)> 0 $ for all $ t\in (t_a, \tau ]$ by observing that the segment $ P_0P_1 $ is trapped for all $ t\in (t_a, \tau ]$ in the moving sector $ QP_0 R $ formed by two rays through $ P_0 $, one horizontal and another vertical (Figure~\ref{fig:p}). Indeed, the horizontal shear in the vector field $ \dot x = y, \  \dot y  = - V ^\prime (x) $ shows that $P_1$ cannot leave through the vertical ray. And we now show that $ P_1 $ cannot cross the the horizontal segment because  $ V ^\prime $ in $ [ \pi -a, \pi ] $ is monotonically decreasing. Indeed, assuming the contrary, and let $ t^\ast\in (t_a, \tau ] $ be the first time when $ P_0P_1 $ is horizontal, i.e. when  $ \dot p_1(t^\ast)  = \dot p_0(t^\ast)  $. Then  
\[
	\dot  s  (t^\ast) =\frac{(\ddot p_1 -\ddot p_0 )( p_1 - p_0 )- (\dot p_1 -\dot p_0 ) ^2 }{(p_1-p_0)^2 } =
	\frac{(-V ^\prime (p_1)+ V ^\prime ( p_0) )( p _1- p_0 )  }{(p_1-p_0)^2 } > 0,  
\]  
using the monotonicity of $ V^\prime $ and the fact that $ p_1>p_0$ for $ t> t_a $. But this contradicts $ \dot s( t^\ast) \leq 0 $, and proves that $ s>0 $   for all $ t\in (t_a, \tau] $.
 And this positivity of $s$ implies that $ | p_1-p_0 | = O (  \sqrt{ E } ) $ for $ t\in (t_a, \tau] $. This completes the proof of   (\ref{eq:p}). 
\end{proof}

\section{\bf Stability for Large Energies}
Numerical evidence in Figure~\ref{fig:traces} suggests that the ``binary", i.e. the synchronous solution of   (\ref{eq:basicode}),   is stable for large energies. 
In this section we prove that this is indeed the case for any $C^2$ periodic  potential.

\begin{thm}
Let $V: {\mathbb R}  \rightarrow {\mathbb R}  $     be an arbitrary periodic potential of class $ C^{2}({\mathbb R}) $ (with no further assumption).  For any $\kappa$, there exists $E_{\kappa} \gg 1$ such that the linearized equation (\ref{eq:plame}) is stable for all large energies $E>E_{\kappa}$: the associated Floquet matrix $F_E $ of   (\ref{eq:plame})  is stable for any $E>E_{\kappa}$, where $T=T(E)$ is the period of the synchronized solution $p=p(t,E)$ defined in (\ref{eq:period}).
\end{thm}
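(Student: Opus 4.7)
The plan is to exploit that for $E \gg 1$ the synchronous orbit rotates so fast through the potential that $V''(p(t;E))$ behaves as a mean-zero rapidly oscillating perturbation, so that (\ref{eq:plame}) reduces to a small perturbation of the stable harmonic oscillator $\ddot w + 2\kappa w = 0$; its Floquet matrix is then close to $I$ but strictly elliptic.

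First I would rescale time by $\tau := \sqrt{2E}\,t$, with small parameter $\epsilon := 1/\sqrt{2E}$. Setting $P_E(\tau) := p(\tau/\sqrt{2E};E)$, equation (\ref{eq:plame}) becomes
\[
w'' + \epsilon^2 q_E(\tau)\, w = 0, \qquad q_E(\tau) := 2\kappa + V''(P_E(\tau)),
\]
where $q_E$ is uniformly bounded in $E$ since $V \in C^2$, and the new period is $\tilde T_E := \sqrt{2E}\,T(E) = \int_{-\pi}^{\pi}(1-V(p)/E)^{-1/2}\,dp \to 2\pi$ as $E \to \infty$. The trace of the Floquet matrix is invariant under this linear rescaling of phase space, so it suffices to control $\mathrm{tr}\,F_E$ for the rescaled system.

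Next I would expand this trace via the Dyson series $F_E = I + \Phi_1 + \Phi_2 + \cdots$ for the matrix system $z' = A_E(\tau)z$ with $A_E = \begin{pmatrix} 0 & 1 \\ -\epsilon^2 q_E & 0 \end{pmatrix}$. Since $A_E$ is off-diagonal, each odd iterated integral $\Phi_{2k+1}$ is purely off-diagonal and hence traceless, while the diagonal entries of $\Phi_{2k}$ carry a factor $\epsilon^{2k}$ from repeated use of the small bottom-left entry. A short integration by parts reduces the trace of $\Phi_2$ to $-\epsilon^2 \tilde T_E \int_0^{\tilde T_E} q_E\,d\tau$, giving
\[
\mathrm{tr}\,F_E = 2 - \epsilon^2 \tilde T_E \int_0^{\tilde T_E} q_E(\tau)\,d\tau + O(\epsilon^4).
\]
Changing variables $\tau \mapsto p$,
\[
\int_0^{\tilde T_E} V''(P_E(\tau))\,d\tau = \int_{-\pi}^{\pi}\frac{V''(p)}{\sqrt{1-V(p)/E}}\,dp \;\longrightarrow\; \int_{-\pi}^{\pi}V''(p)\,dp = 0,
\]
by dominated convergence and periodicity of $V$, so $\int q_E\,d\tau = 2\kappa\tilde T_E + o(1)$ and
\[
\mathrm{tr}\,F_E = 2 - \frac{4\pi^2 \kappa}{E} + o(1/E).
\]
For $\kappa > 0$ this lies strictly inside $(-2,2)$ for all sufficiently large $E$; hence $F_E$ is elliptic and (\ref{eq:plame}) is linearly stable.

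I expect the main subtle point to be the middle step: verifying that the $O(\epsilon^2)$ correction in the trace is the definite negative quantity $-\epsilon^2 \tilde T_E \int q_E$ (guaranteeing $\mathrm{tr}\,F_E < 2$) rather than some oscillating term of indefinite sign; the iterated-integral integration by parts is what supplies this definite sign. Uniform control of the $O(\epsilon^4)$ Dyson tail is routine since $\epsilon \to 0$ and $\tilde T_E \to 2\pi$ are both bounded, and the diagonal entries of $\Phi_{2k}$ for $k \geq 2$ are of order $\epsilon^{2k}$.
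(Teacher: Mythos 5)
Your proof is correct, but it takes a genuinely different route from the paper's. The paper also starts from the smallness of the period, $T(E)=O(1/\sqrt{E})$, and the same key cancellation $\int_0^{2\pi}V''(p)\,dp=0$, obtaining $F_E=I+\bar{L}T+o(1/\sqrt{E})$ with $\bar{L}$ the averaged coefficient matrix; but it then observes that at this order of accuracy the trace criterion is out of reach (the deviation of $\mathrm{tr}\,F_E$ from $2$ is $O(T^2)=O(1/E)$, smaller than the $o(1/\sqrt{E})$ error), and instead proves ellipticity by showing the absence of real eigenvectors: $F_Eu\cdot u^{\perp}=T(2\kappa u_1^2+u_2^2)+o(1/\sqrt{E})>0$ for every unit vector $u$, which needs only first-order information. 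You attack precisely the point the paper sidesteps: by rescaling time and exploiting the anti-diagonal structure of the coefficient matrix in the Peano--Baker/Dyson series (odd terms traceless, diagonal entries of $\Phi_{2k}$ of size $\epsilon^{2k}$), you compute the trace to second order, and the integration-by-parts identity $\mathrm{tr}\,\Phi_2=-\epsilon^2\tilde T_E\int_0^{\tilde T_E}q_E\,d\tau$ — which I checked and is exact — gives the definite sign, yielding $\mathrm{tr}\,F_E=2-4\pi^2\kappa/E+o(1/E)$. What your route buys is a sharper, quantitative statement: it locates the Floquet matrix on the stable side of the parabolic boundary $\mathrm{tr}=2$ and measures its distance from instability, essentially a perturbative Lyapunov-type criterion; what the paper's route buys is robustness, since it never needs the delicate second-order bookkeeping or uniform control of the Dyson tail. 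Two small points to make explicit in a final write-up: you should invoke $\det F_E=1$ (constancy of the Wronskian) to pass from $|\mathrm{tr}\,F_E|<2$ to ellipticity and boundedness of solutions, and your conclusion, like the paper's own argument (which uses $\min(1,2\kappa)>0$), requires $\kappa>0$, so the theorem's phrase ``for any $\kappa$'' is to be read with the physically natural positivity of the coupling — this is not a defect of your argument relative to the paper's.
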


\begin{proof}
  Wirting the linearized equation (\ref{eq:plame}) as the first-order linear system   (\ref{eq:lingens}), or  more compactly,  as
   $$\dot{z}=L(t)z$$ 
where $z=(w,\dot{w})$ and 
\[ L(t) =
   \begin{pmatrix}
      0 & 1 \\
      -(2\kappa+V''(p(t))) & 0
   \end{pmatrix}, 
\]
we conclude that 
\[
	| z(T) | \leq | z(0) | e^{l T}, 
\]  
where $ l = \max_{t\in[0,T]}\Vert L(t)\Vert$ is independent of $E$. Here $ \Vert\cdot\Vert $ denotes the matrix norm generated by the Euclidean norm $ | \cdot  |  $.  
Now $T$ is small for large $E$: $T(E)=O(1/\sqrt{E})$, as follows from \ref{eq:period}. Thus for $ t\in [0,T]$ there is little variation in $z$:  
$$z(t) = z(0) + \int_0^t L(s) z(s) ds = z(0) + r_1(t),$$
where $r_1(t) = O(1/\sqrt{E})$. Thus 
\begin{align*}
    z(T) &= z(0) + \int_0^T L(s) z(s) ds \\
         &= z(0) + \int_0^T L(s) z(0) ds + \int_0^T L(s) r_1(s) ds \\ 
         &= (I+\Bar{L}T) z(0) + r_2(T)
\end{align*}
where $\Bar{L}=\frac{1}{T}\int_0^T L(s) ds$ and $r_2(T)=o(1/\sqrt{E})$, and the Floquet matrix is therefore 
$$F_E = I + \Bar{L}T + o(1/\sqrt{E}).$$
To prove that $ F_E $ is stable for large $E$ we compute $ \overline L $ whose form turns to guarantee stability for large $E$.  From \ref{eq:energy} we conclude that 
$p$ grows nearly linearly for large $E$:  
$$\dot{p}(t) = \sqrt{2(E-V)} = \sqrt{2E} + O(1/\sqrt{E}), $$ 
so that 
\begin{align*}
   \overline L= \frac{1}{T} \int_0^T (2\kappa + V''(p(s))) ds
    &= 2\kappa + \frac{1}{T} \int_0^{2\pi} \frac{V''(p)}{\dot{p}} dp\\
    &= 2\kappa + \frac{1}{T} \int_0^{2\pi} \frac{V''(p)}{\sqrt{2E}} dp + o(1/\sqrt{E})\\
    &= 2\kappa + o(1/\sqrt{E}), 
\end{align*}
using periodicity  of $ V ^\prime $ in the last step. 
Therefore the Floquet matrix is of the form 
\[ F_E = 
   \begin{pmatrix}
      1 & T \\
      -2\kappa T & 1
   \end{pmatrix} +  o(1/\sqrt{E}).
\] 

 it is not clear from the last expression whether the stability condition $  | {\rm tr}\, F | <2 $ is satisfied without knowing more about the remainder. Interestingly, this knowledge is not necessary: we will show that the leading term guarantees the absence of real eigenvectors and thus ellipticity.  Absence of real eigenvectors amounts to showing    that  $ F_E u \cdot u^{\perp} \not= 0 $   for any nonzero vector $u=(u_1,u_2)\in\mathbb{R}^2$, which we do now: 
\[
    F_E u \cdot u^{\perp} = T(2 \kappa u_1^2 + u_2^2 )+o(1/\sqrt{E})\geq T\min(1, 2 \kappa )+o(1/\sqrt{E})>0 
\]
 for sufficiently large $E$, since the remainder is small  relative to  $T=T(E)= O (1/ \sqrt{ E } ) $. 
\end{proof}

\section{\bf Collapsed resonances in sinusoidal potentials and Lam\'e's equation}

 So far we described the properties common to  general periodic potentials. Remarkably, in the presence of only  one harmonic: $ V(x) = \kappa  \cos x $  all instability intervals,  except for the first one, collapse to a point:  $ E _{2n}= E_{2n-1} $ for all $ n \geq 2 $.\footnote{under the assumption $ \kappa > \frac{1}{2} $ which is necessary for the existence of an infinite sequence $ E_n $.}    The underlying reason for this collapse of instability intervals  is the fact  that linearization around the synchronous solution is a disguised Lam\'es equation. 
 
 The result of this section implies that sinusoidal potentials are the most stable ones for traveling ``binaries", i.e. that the traveling solutions are never hyperbolically unstable except for one specific interval of energies. 

 In Section 4.1 we show that the linearization of the coupled pendula system is a Lam\'e equation in disguise and consequently in Section 4.2 we show that only one interval of instability survives.

\subsection{The coupled Pendula and the Lam\'e equation}
In this section we reveal the fact that the linearization of the coupled pendula system is a special case of the Lam\'e equation.

We recall that the general Lam\'e equation has the form
\begin{equation} 
	 \ddot{W} + \left[\lambda - n(n+1)k^2 {\rm sn}^2(t, k)\right] W=0  
	\label{eq:lame}
\end{equation}  
where  $n$ is a positive integer and  where ${\rm sn}(t, k)$ is Jacobi's elliptic function.\footnote{Recall the definition of the ``snoidal" function $ {\rm sn}(t,k) $: given
given $t\in {\mathbb R}   $
and $   k \in [0,1) $, one defines $ {\rm sn}(t, k) $ via 
\[
	t = \int_{0}^{{\rm sn}(t, k)} \frac{du}{ \sqrt{(1-u ^2 )(1-k ^2 u^2 ) }}. 
\]  
Equivalently, by substituting $ u = \sin \theta $, one can define $x$ via 
$ t= \int_{0}^{x} \frac{d\theta}{ \sqrt{1-k ^2 \sin ^2 \theta  } } $ and set  $ {\rm sn}(t,k) \buildrel{def}\over{=}  \sin x $. It is this latter form of the definition that we will use.
}

\begin{thm}\label{lameq}  Let  $w$ be a solution of the linearization 
 \begin{equation} 
	 \ddot{w}+ (2\kappa+\sin p  ) \;w = 0
 \end{equation} 
  around the synchronous solution $p$  ($ \ddot p + \sin p = 0 $) 
with the ``energy surplus"  $E$, as in (\ref{eq:energy}), i.e. with $ \dot p ^2 /2 + (1-\cos p ) =E $. 
The rescaled function
\begin{equation} 
	W(\tau) = w(k \tau), \  \  k ^2 =    \frac{2}{2+ E}  
	\label{eq:rescale}
\end{equation} 
satisfies the   Lam\'e equation     corresponding to $ n=1$: 
\begin{equation} 
	W ^{\prime\prime}  + \left[\lambda - 2k^2 {\rm sn}^2(\tau, k)\right] W =0,  	\  \  ^\prime = \frac{d}{d\tau};
	\label{eq:W}
\end{equation}
and with   
\begin{equation} 
	\lambda = (2\kappa +1 ) k ^2 \  \  \hbox{and}  \  \  k ^2 = 2/(2+ E).
	\label{eq:lk}
\end{equation}     
\end{thm}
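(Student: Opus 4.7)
The plan is to recognize the linearization as a Lam\'e equation in disguise, via a time rescaling that coincides with the classical Jacobi elliptic parametrization of the pendulum. I would organize the calculation in three steps.

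First, I would use the half-angle identity to expose the Lam\'e structure of the coefficient. Since $V''(p) = \cos p$ for the sinusoidal potential, the identity $\cos p = 1 - 2\sin^2(p/2)$ rewrites the linearized equation as
\begin{equation*}
  \ddot w + \bigl[(2\kappa+1) - 2\sin^2(p/2)\bigr]\,w = 0,
\end{equation*}
which already displays the Lam\'e pattern ``constant minus coefficient times a squared trigonometric function of $p/2$''.

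Second, I would parametrize the synchronous solution by Jacobi's ${\rm sn}$ using energy conservation, normalized in accordance with the paper's standing convention $V(\pi) = 0$. This gives $\dot p^2 = 2(E+2) - 4\sin^2(p/2) = (4/k^2)\bigl(1 - k^2 \sin^2(p/2)\bigr)$ when one sets $k^2 := 2/(2+E)$. Separating variables as $dt = k\,d(p/2)/\sqrt{1 - k^2 \sin^2(p/2)}$ and integrating from the initial condition $p(0) = 0$ yields
\begin{equation*}
t/k = \int_0^{p(t)/2}\! \frac{du'}{\sqrt{1 - k^2 \sin^2 u'}},
\end{equation*}
which is precisely the defining integral of ${\rm sn}$ recalled in the paper's footnote. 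Therefore $\sin(p(t)/2) = {\rm sn}(t/k, k)$.

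Third, I would carry out the rescaling $W(\tau) = w(k\tau)$ prescribed by the theorem. By the chain rule $W''(\tau) = k^2 \ddot w(k\tau)$; evaluating the linearization at $t = k\tau$, multiplying through by $k^2$, and substituting the parametrization $\sin(p(k\tau)/2) = {\rm sn}(\tau, k)$ from step two produces
\begin{equation*}
  W''(\tau) + \bigl[(2\kappa+1)k^2 - 2k^2\,{\rm sn}^2(\tau, k)\bigr]\,W(\tau) = 0,
\end{equation*}
which is exactly equation (\ref{eq:lame}) with $n=1$ and $\lambda = (2\kappa+1)k^2$, as claimed.

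The one delicate point is step two: the modulus $k^2 = 2/(2+E)$ is forced by normalizing $V$ so that its maximum is zero (the paper's standing assumption), and a different additive constant in $V$ would shift $E$ and replace this $k$ by one of the other natural candidates such as $2/E$ or $E/2$. Once the Jacobi identification is correctly pinned down, the whole theorem reduces to the direct comparison of coefficients in step three.
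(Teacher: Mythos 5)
Your proposal is correct and follows essentially the same route as the paper: the half-angle identity $\cos p = 1-2\sin^2(p/2)$, the energy-conservation integral identifying $\sin(p/2)={\rm sn}(t/k,k)$ with $k^2=2/(2+E)$, and the rescaling $\tau=t/k$ to land on the $n=1$ Lam\'e form with $\lambda=(2\kappa+1)k^2$. Your remark on the normalization is well taken: the paper's own computation likewise uses the convention $V(\pi)=0$ (i.e.\ $\dot p^2/2-(1+\cos p)=E$), which is what forces $k^2=2/(2+E)$ rather than $2/E$, and your step two pins this down correctly.
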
 

\begin{proof}
  To establish a connection between the  coefficients  in  (\ref{eq:plame})  and  (\ref{eq:W})  we express $ \cos p $ in terms of $ {\rm sn}  $. From the energy conservation (\ref{eq:energy}),  and using the trigonometric identity $ 1+ \cos p = 2-2 \sin ^2 (p/2) $ we obtain  the implicit expression for $p$: 
\begin{align*}
t = \int_0^{p/2} \frac{d\theta}{\sqrt{1 + \frac{\epsilon}{2}  -\sin^2 (\theta)}}, 
\end{align*}
 or, setting $\displaystyle k ^2 =\frac{1}{1+\frac{\epsilon}{2}}$, 
\begin{align*}
\frac{t}{k}  &= \int_0^{p/2} \frac{ d\theta}{\sqrt{1 - k^2 \sin^2 \theta}}.
 \end{align*}
 By the definition of $ {\rm sn} $, this gives 
 \[
	{\rm sn} \biggl(  \frac{t}{k} , k \biggl)   = \sin\frac{p(t)}{2} .
\] 
Squaring both sides and using $ 2\sin ^2 (p/2) = 1-\cos p $, we obtain after a short manipulation: 
\begin{equation} 
		  \cos p = 1-2 {\rm sn}^2  \biggl(  \frac{t}{k} , k \biggl).  
	\label{eq:cossn}
\end{equation}   
Substituting this into   (\ref{eq:plame})  we obtain an equivalent form
\[
	\ddot w + \biggl( 2\kappa+1-2 {\rm sn}^2  \biggl(  \frac{t}{k} , k \biggl) \biggl)  w=0. 
\] 
Finally, rescaling $ \tau = t/k $ results in   (\ref{eq:W}) and completes the proof of  Theorem \ref{lameq}. 
\end{proof}

\subsection{Collapse of Instability Intervals}
It is a remarkable and well known fact that for the Lam\'e equation  (\ref{eq:lame})  all but the first $n$ spectral gaps are collapsed: for fixed $k$  and $\lambda$ as the parameter, the trace of the Floquet matrix exceeds $2$ in absolute value  for precisely $n$ 
intervals of $\lambda$ \cite{HHV}. In particular, for the $ n=1 $ case, the first gap is open an all the others are closed. However,  our case  (\ref{eq:W})  $\lambda$ is not an independent parameter but rather both it and  $k$  vary with  the independent parameter $E$; it is thus unclear whether the same statement about gaps in  $E$ holds true. Theorem  \ref{oneinterval}  states that it does. 

\begin{thm} \label{oneinterval}
The synchronous solution of   (\ref{eq:basicode}) (which exists iff $ E>0 $) is linearly unstable if and only if
\begin{equation} 
	E \in (4 \kappa -2, 4 \kappa). 
	\label{eq:interval}
\end{equation}   
For all other values of $E$ the solution is linearly  stable, Figure~\ref{fig:sl2r}. In particular, for $ \kappa >1/2$ the first two terms in the  sequence  $E_m\downarrow 0 $ are  $ E_1= 4 \kappa $,   $ E_2= 4 \kappa -2 $. At all other resonant energies $ E_3, \  E_4, \ldots $ linearization around the synchronous solutions is neutrally stable, i.e. the Floquet matrix of the linearization is  identity. 
\end{thm}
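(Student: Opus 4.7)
The plan is to reduce the claim to a spectral statement for the Lam\'e equation (\ref{eq:W}) via Theorem \ref{lameq} and then to exploit the classical one-gap structure of Lam\'e with $n=1$. Since $\tau=t/k$ is an affine time reparametrization, Floquet stability of (\ref{eq:plame}) over its period $T(E)=2kK(k)$ is equivalent to Floquet stability of (\ref{eq:W}) over the period $2K(k)$ of its coefficient $\mathrm{sn}^2(\tau,k)$, and the two monodromy matrices are conjugate by a diagonal scaling, so their eigenvalues coincide. It therefore suffices to locate the instability intervals of (\ref{eq:W}) in the parameter $\lambda$ and then pull the answer back to $E$ through $\lambda=(2\kappa+1)k^2$ and $k^2=2/(2+E)$.

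The heart of the argument is pinning down the spectrum of the $n=1$ Lam\'e equation. A direct substitution into (\ref{eq:W}), using the standard identities $\mathrm{sn}'=\mathrm{cn}\,\mathrm{dn}$, $\mathrm{cn}'=-\mathrm{sn}\,\mathrm{dn}$, $\mathrm{dn}'=-k^2\mathrm{sn}\,\mathrm{cn}$, verifies that (\ref{eq:W}) admits three explicit periodic/antiperiodic solutions:
\begin{equation*}
W=\mathrm{dn}(\tau,k),\ \lambda=k^2\ (\text{period }2K);\quad W=\mathrm{cn}(\tau,k),\ \lambda=1;\quad W=\mathrm{sn}(\tau,k),\ \lambda=1+k^2\ (\text{both antiperiodic over }2K).
\end{equation*}
By standard Hill-equation theory the endpoints of instability intervals in $\lambda$ are precisely the simple periodic/antiperiodic eigenvalues, and the classical one-gap property of Lam\'e with $n=1$ (see e.g.\ \cite{HHV}) asserts that the three eigenvalues above are the \emph{only} simple ones, all remaining periodic/antiperiodic eigenvalues being doubly degenerate. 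Hence the unique open instability gap is $\lambda\in(1,1+k^2)$; the semi-infinite interval $\lambda<k^2$ is not realized here, since $\lambda=(2\kappa+1)k^2>k^2$ whenever $\kappa>0$.

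Translating $1<(2\kappa+1)k^2<1+k^2$ through $k^2=2/(2+E)$ reduces, after multiplying by $(2+E)/2$, to the two inequalities $2+E<4\kappa+2<4+E$, i.e.\ $E\in(4\kappa-2,4\kappa)$, which is (\ref{eq:interval}). The endpoints are the first two resonant energies $E_1=4\kappa$ (eigenfunction $\mathrm{cn}$) and $E_2=4\kappa-2$ (eigenfunction $\mathrm{sn}$). At every subsequent resonant energy $E_3,E_4,\ldots$ the corresponding Lam\'e gap is collapsed, so both linearly independent solutions of (\ref{eq:W}) are simultaneously periodic, or simultaneously antiperiodic, over $2K$; the monodromy matrix of (\ref{eq:plame}) over $T(E)$ is therefore $\pm I$, which gives the neutral-stability assertion at the end of the theorem.

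The main obstacle is to avoid treating the one-gap property of $n=1$ Lam\'e as a black box. A self-contained route is to show that the Hill discriminant factors as $\Delta^2(\lambda)-4=-(\lambda-k^2)(\lambda-1)(\lambda-1-k^2)\,Q(\lambda)^2$ for some entire function $Q$: the three explicit solutions above force the displayed roots to be simple, and the finite-gap (Novikov) condition, which can be checked directly because $2k^2\mathrm{sn}^2\tau$ satisfies a stationary first KdV equation, forces every other zero of $\Delta^2-4$ to have even multiplicity, so every other gap collapses. This factorization is the only non-elementary ingredient and everything else is an algebraic manipulation of the parametrization $\lambda=(2\kappa+1)k^2$, $k^2=2/(2+E)$.
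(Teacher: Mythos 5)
Your proposal is correct, and it reaches the key computation by a genuinely different route than the paper. The paper transforms the $n=1$ Lam\'e equation (\ref{eq:W}) into Ince's equation via $u=\mathrm{am}(\tau,k)$ and extracts the simple antiperiodic eigenvalues $\lambda_1'=1$, $\lambda_2'=1+k^2$ from the singularity of the truncated Magnus--Winkler recurrences (the determinant conditions (\ref{Adet})--(\ref{Bdet})), while quoting Ince/Erd\'elyi/Haese-Hill--Halln\"as--Veselov for the fact that only the first gap is open; you instead exhibit the classical eigenfunctions $\mathrm{dn},\mathrm{cn},\mathrm{sn}$ with $\lambda=k^2,\,1,\,1+k^2$ directly, which is more elementary and makes the band edges transparent, but you still rely (via \cite{HHV}, or your finite-gap discriminant sketch, which is only a sketch) on the same nontrivial input --- that all remaining periodic/antiperiodic eigenvalues are double. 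Both treatments dispose of the genuine subtlety, namely that $\lambda$ and $k$ vary simultaneously with $E$, in the same way: for each fixed $E$ instability is equivalent to $1<\lambda(E)<1+k(E)^2$, and your algebra $2+E<4\kappa+2<4+E$ correctly pulls this back to $E\in(4\kappa-2,4\kappa)$; your observation that $\lambda=(2\kappa+1)k^2>k^2$ excludes the semi-infinite region below the spectrum is a point the paper leaves implicit, and the time-rescaling/conjugacy remark justifying that stability of (\ref{eq:plame}) over $T(E)$ matches that of (\ref{eq:W}) over $2K$ is likewise a useful explicit step. One small discrepancy: at the collapsed resonances $E_3,E_4,\ldots$ you conclude the monodromy is $\pm I$ (periodic and antiperiodic double eigenvalues alternate along the path as $E\downarrow 0$), whereas the theorem as stated says the Floquet matrix is the identity; your version is, if anything, the more precise one, and it still yields the claimed neutral stability.
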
  

Since Equation  (\ref{eq:W}) is the $n=1$ case of the general Lam\'e equation, we can study the stability of the linearized system   (\ref{eq:cml})-(\ref{eq:plame}) by using some properties of the Lam\'e equation.

The Lam\'e equation is a special class of the Hill's equations. By the Oscillation Theorem for the Hill operator (Theorem 2.1 in \cite{MaWi}), there exist two sequences of real numbers $\{\lambda_i\}$ and $\{\lambda'_i\}$ tending to infinity such that 
$$\lambda_0<\lambda'_1\le\lambda'_2<\lambda_1\le\lambda_2<\lambda'_3\le\lambda'_4<\cdots, $$
Figure~\ref{fig:sl2r} (left). 
 Here   $\lambda_k$ correspond to the case when $1$ is an eigenvalue of the Floquet matrix, and thus   (\ref{eq:lame}) has a periodic solution of the same period 
 $2K$ as the potential; similarly,    $ \lambda ^\prime_k $ correspond the eigenvalue $-1 $, i.e.  to $2K$-antiperiodic solutions of (\ref{eq:lame}). The half-period  $K=\int_0^{\pi/2}\frac{dt}{\sqrt{1-k^2\sin^2 t}}$. 
\begin{figure}[thb]
 	\captionsetup{format=hang}
	\center{  \includegraphics{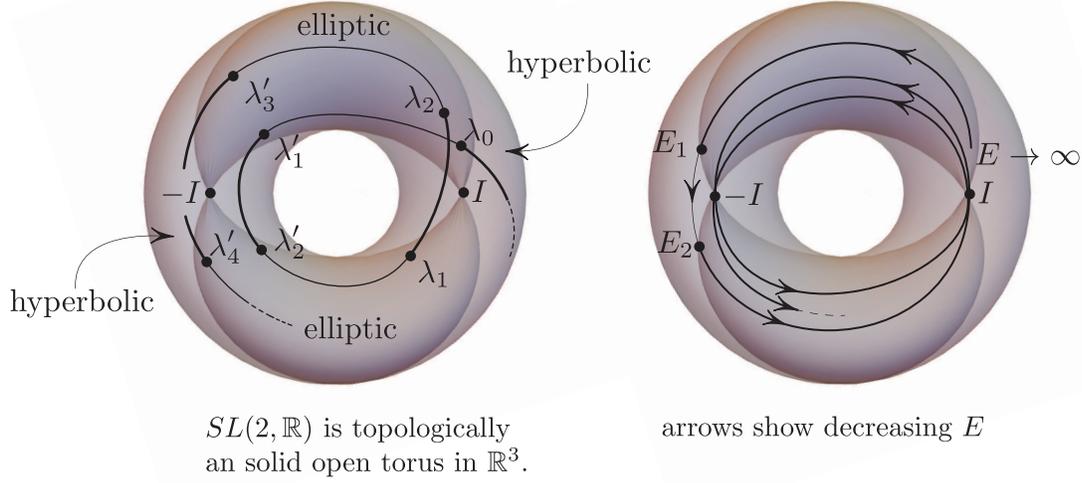}}
	\caption{The path of the Floquet matrix in $ SL(2, {\mathbb R} ) $  for a generic potential as a function of the eigenvalue parameter $\lambda$ (left) and for the Lam\'e potential as the function of the energy $E$   (right). All the $E$--intervals except for $ [E_2, E_1] $ are collapsed.}
	\label{fig:sl2r}
\end{figure}  
 
Floquet matrix of Lam\'e's equation (\ref{eq:lame}) is hyperbolic for $\lambda \in (\lambda_i,\lambda_{i+1})$ and for $ \lambda \in (\lambda'_i,\lambda'_{i+1})$. The instability zone collapses: $ \lambda _i= \lambda _{i+1}$   iff the Floquet matrix is $I$, i.e. iff two two linearly independent solutions of period $2K$ coexist. Simiarly, the collapse $ \lambda'_i=\lambda'_{i+1}$ corresponds to the  Floquet matrix being $ -I $ (Figure~\ref{fig:sl2r}), i.e. to the existence of    two independent  antiperiodic solutions of antiperiod  $2K$.
 
Ince \cite{In} showed that there are at most $n$ open intervals of instability for the Lam\'e equation. Erd\'elyi \cite{Er} proved that Ince's estimate is exact: there are exactly $n$ open intervals of instability for the Lam\'e equation. Later Haese-Hill, Halln\"{a}s and Veselov \cite{HHV} pointed out the position of the instability intervals: the first $n$ instability intervals are open.

The Lam\'e equation  (\ref{eq:lame}) corresponds to the case $ n=1 $ and hence there is only interval of instabililty $(\lambda'_1,\lambda'_2)$ by the foregoing discussion. Now we prove Theorem \ref{oneinterval}.

\begin{proof}[Proof of Theorem \ref{oneinterval}]
We tailor the techniques in \cite{HHV} \cite{MaWi} and, \cite{In} and present the computation of instablity intervals specifically for our case.

To begin our proof, we first discuss the coexistence problem for a more general class called Ince's equation
\begin{equation}\label{Ince}
  (1+a\cos2u)\psi''+b(\sin2u)\psi'+(c+d\cos2u)\psi=0,  
\end{equation}
where $a,b,c,d\in\mathbb{R}$ and $|a|<1$. Then we specialize the analysis to the Lam\'e equation   (\ref{eq:lame}) via the transformation $u=\hbox{am}(\tau,k)$ where am is the Jacobi amplitude defined by $$\frac{du}{d\tau}=\sqrt{1-k^2\sin^2u},$$ and consequently $$a=\frac{k^2}{2-k^2},\ b=-\frac{k^2}{2-k^2},\ c=\frac{2\lambda-n(n+1)k^2}{2-k^2},\ d=\frac{n(n+1)k^2}{2-k^2}.$$

By Lemma 7.3 in \cite{MaWi}, if the Ince's equation (\ref{Ince}) has two linearly independent $\pi$-antiperiodic solutions, then its fundamental solutions $\psi_1,\psi_2$ take the forms 
\begin{equation}\label{funsol}
   \psi_1=\sum_{n=0}^{\infty}A_{2m+1}\cos(2m+1),\ \psi_2=\sum_{n=0}^{\infty}B_{2m+1}\sin(2m+1). 
\end{equation}

Substituting (\ref{funsol}) into (\ref{Ince}), we obtain the following recurrence relations 
\begin{equation}\label{anticos}
    \begin{cases}
       \left(Q(-\frac{1}{2})+\Lambda(\frac{1}{2})\right)A_1 + Q(-\frac{3}{2})A_3=0\\
       Q(m-\frac{1}{2})A_{2m-1} + \Lambda(m+\frac{1}{2})A_{2m+1} + Q(-(m+\frac{3}{2}))A_{2m+3}=0,\ m\ge1
    \end{cases}
\end{equation}
and 
\begin{equation}\label{antisin}
    \begin{cases}
       \left(-Q(-\frac{1}{2})+\Lambda(\frac{1}{2})\right)B_1 + Q(-\frac{3}{2})B_3=0\\
       Q(m-\frac{1}{2})B_{2m-1} + \Lambda(m+\frac{1}{2})B_{2m+1} + Q(-(m+\frac{3}{2}))B_{2m+3}=0,\ m\ge1
    \end{cases}
\end{equation}
where $Q(m)=2am^2-bm-\frac{d}{2}$, $\Lambda(m)=4m^2-c$.

We observe that for the Lam\'e equation (\ref{eq:lame}) we have $Q(n-\frac{1}{2})=0$. Given this observation, by examining the recurrence relations (\ref{anticos})(\ref{antisin}), the Ince's equation (\ref{Ince}) will have two linearly independent $\pi$-antiperiodic solutions if we know one of them has finite order larger than $n$ or infinite order (Theorem 7.3 in \cite{MaWi}), thus closing the instability intervals. The only way to create one but not two linearly independent $\pi$-antiperiodic solution is to find solutions of order smaller than $n$ (Theorem 7.6 in \cite{MaWi}). As a consequence, we consider the following two finite order linear systems 
\begin{equation}\label{Asystem}
    \begin{pmatrix}
       Q(-\frac{1}{2})+\Lambda(\frac{1}{2}) & Q(-\frac{3}{2}) & & \\
       Q(\frac{1}{2}) & \Lambda(\frac{3}{2}) & Q(-\frac{5}{2}) & \\
       & & \cdots & & \\
       & & &Q(n-\frac{3}{2}) &\Lambda(n-\frac{1}{2})
    \end{pmatrix}
    \begin{pmatrix}
       A_1\\
       A_3\\
       \cdots\\
       A_{2n-1}
    \end{pmatrix}
    =
    \begin{pmatrix}
       0\\
       0\\
       0\\
       0
    \end{pmatrix},
\end{equation}
and
\begin{equation}\label{Bsystem}
    \begin{pmatrix}
       -Q(-\frac{1}{2})+\Lambda(\frac{1}{2}) & Q(-\frac{3}{2}) & & \\
       Q(\frac{1}{2}) & \Lambda(\frac{3}{2}) & Q(-\frac{5}{2}) & \\
       & & \cdots & & \\
       & & &Q(n-\frac{3}{2}) &\Lambda(n-\frac{1}{2})
    \end{pmatrix}
    \begin{pmatrix}
       B_1\\
       B_3\\
       \cdots\\
       B_{2n-1}
    \end{pmatrix}
    =
    \begin{pmatrix}
       0\\
       0\\
       0\\
       0
    \end{pmatrix}.
\end{equation}

We can find nontrivial solutions $(A_1,A_3,\cdots,A_{2n-1})$, $(B_1,B_3,\cdots,B_{2n-1})$ to the above linear systems (\ref{Asystem})(\ref{Bsystem}) by making the coefficient matrices singular, i.e. 
\begin{equation}\label{Adet}
    \det\begin{vmatrix}
       Q(-\frac{1}{2})+\Lambda(\frac{1}{2}) & Q(-\frac{3}{2}) & & \\
       Q(\frac{1}{2}) & \Lambda(\frac{3}{2}) & Q(-\frac{5}{2}) & \\
       & & \cdots & & \\
       & & &Q(n-\frac{3}{2}) &\Lambda(n-\frac{1}{2})
    \end{vmatrix}
    =0
\end{equation}
and 
\begin{equation}\label{Bdet}
    \det\begin{vmatrix}
       -Q(-\frac{1}{2})+\Lambda(\frac{1}{2}) & Q(-\frac{3}{2}) & & \\
       Q(\frac{1}{2}) & \Lambda(\frac{3}{2}) & Q(-\frac{5}{2}) & \\
       & & \cdots & & \\
       & & &Q(n-\frac{3}{2}) &\Lambda(n-\frac{1}{2})
    \end{vmatrix}
    =0 .
\end{equation}

Since we only need the result for $n=1$ Lam\'e equation, we present the computation for $n=1$ here.

In our case, $$Q(-\frac{1}{2})=-\frac{k^2}{2-k^2},\ \Lambda(\frac{1}{2})=1-\frac{2\lambda-2k^2}{2-k^2},$$
thus (\ref{Adet}) gives $\lambda'_1=1$ and (\ref{Bdet}) gives $\lambda'_2=1+k^2$. 
We conclude the proof by substituting the relations $k^2=\frac{2}{2+\epsilon}$ and $\lambda=(2\kappa+1)k^2$ from 
Theorem \ref{lameq}.
\end{proof}

\bibliography{references}

\begin{thebibliography}{10}

\bibitem{BrKi}
O.~M. Braun and Y.~S. Kivshar.
\newblock {\em The {F}renkel-{K}ontorova model}.
\newblock Texts and Monographs in Physics. Springer-Verlag, Berlin, 2004.
\newblock Concepts, methods, and applications.

\bibitem{CPR}
G.~Churchill, R. C.~Pecelli and D.L. Rod.
\newblock Stability transitions for periodic orbits in hamiltonian systems.
\newblock {\em Arch. Rational Mechanics and Analysis}, 73:313--347, 1980.

\bibitem{Er}
A.~Erd\'{e}lyi.
\newblock On {L}am\'{e} functions.
\newblock {\em Philos. Mag. (7)}, 31:123--130, 1941.

\bibitem{HHV}
W.~A. Haese-Hill, M.~A. Halln\"{a}s, and A.~P. Veselov.
\newblock On the spectra of real and complex {L}am\'{e} operators.
\newblock {\em SIGMA Symmetry Integrability Geom. Methods Appl.}, 13:Paper No.
  049, 23, 2017.

\bibitem{ImSc}
Y.~Imry and L.~S. Schulman.
\newblock Qualitative theory of the nonlinear behavior of coupled josephson
  junctions.
\newblock {\em Journal of Applied Physics}, 49(2):749--758, 1978.

\bibitem{In}
E.~L. Ince.
\newblock Further investigations into the periodic {L}am\'{e} functions.
\newblock {\em Proc. Roy. Soc. Edinburgh}, 60:83--99, 1940.

\bibitem{Lame1837}
G.~Lam\'e.
\newblock M\'emoire sur les surfaces isothermes dans les corps solides
  homog\'enes en \'equilibre de temp\'erature.
\newblock {\em Journal de Math\'ematiques Pures et Appliqu\'ees}, 2:147--183,
  1837.

\bibitem{Levi88}
M.~Levi.
\newblock Stability of the inverted pendulum---a topological explanation.
\newblock {\em SIAM Rev.}, 30(4):639--644, 1988.

\bibitem{MaWi}
W.~Magnus and S.~Winkler.
\newblock {\em Hill's equation}.
\newblock Dover Publications, Inc., New York, 1979.
\newblock Corrected reprint of the 1966 edition.

\bibitem{Novikov1974}
S.~P. Novikov.
\newblock A periodic problem for the {K}orteweg-de {V}ries equation. {I}.
\newblock {\em Funkcional. Anal. i Prilo\v{z}en.}, 8(3):54--66, 1974.

\end{thebibliography}
\bibliographystyle{plain}

\end{document}